\documentclass[reqno]{amsart}
\usepackage{amsmath, amsthm, amssymb, xypic, enumitem, iitem}
\usepackage{hyperref}
\usepackage{verbatim} 
\usepackage{tikz}
\usepackage{tikz-cd}
\usepackage{color}
\usepackage{cite}
\usepackage{setspace}
\usepackage{physics}
\usepackage{adjustbox}
\usepackage{todonotes}
\usetikzlibrary{nfold}

\calclayout

\setenumerate{label=\textnormal{(\arabic*)}} 

\newcommand{\A}{\mathcal{A}}
\newcommand{\C}{\mathcal{C}}
\newcommand{\D}{\mathcal{D}}
\newcommand{\X}{\mathcal{X}}
\newcommand{\Y}{\mathcal{Y}}
\newcommand{\Z}{\mathcal{Z}}
\renewcommand{\A}{\mathcal{A}}

\newcommand{\W}{\mathcal{W}}
\newcommand{\T}{\mathcal{T}}

\newcommand{\ob}{\text{Ob}}
\newcommand{\mor}{\text{Mor}}
\newcommand{\smc}{\mathbf{SymMonCat}_\mathrm{s}}
\newcommand{\psh}{\X \star_\A \Y}
\newcommand{\thr}{\T} 
\newcommand{\ntns}{\mathbin{\boxtimes}}
\newcommand{\ncmp}{%
  \mathbin{\text{\fboxsep=-.2pt\fbox{\rule{0pt}{1ex}\rule{1ex}{0pt}}}}%
} 
\newcommand{\obs}{\mathcal{O}} 
\newcommand{\obt}{\mathcal{O}'} 
\newcommand{\pmaf}{\mathcal{F}} 
\newcommand{\pmas}{\pmaf'} 
\newcommand{\pma}{\mathcal{M}} 
\newcommand{\cta}{\T} 
\DeclareMathOperator{\zi}{ZI}
\DeclareMathOperator{\tp}{tp} 
\DeclareMathOperator{\id}{id}

\numberwithin{equation}{section}

\theoremstyle{plain}
\newtheorem{theorem}[equation]{Theorem}
\newtheorem{corollary}[equation]{Corollary}

\newtheorem{proposition}[equation]{Proposition}

\theoremstyle{definition}
\newtheorem{definition}[equation]{Definition}

\newlist{romanlist}{enumerate}{1}
\setlist[romanlist, 1]{
  label=\roman{romanlisti}.,
  ref=\arabic{section}.\arabic{equation}.\roman{romanlisti},
}

\usepackage[foot]{amsaddr}

\title{On pushouts of strict symmetric monoidal categories}

\author{Lucas Anderson$^1$}
\address{$^1$Oregon State University}
\author{Ariel E. Rosenfield$^2$}
\address{$^2$University of Maryland, College Park}
\author{Eric Yu$^2$}
\email{anderl23@oregonstate.edu, ariari@umd.edu, yue8118@terpmail.umd.edu}

\begin{document}
	\begin{spacing}{1.1}

    \begin{abstract} Toward modeling computational concurrency using the language of central idempotents in monoidal categories, we give an explicit construction of the pushout of strict symmetric monoidal categories. We show that any braided strong monoidal functor strongly preserves central idempotents, and as a corollary, that the lattice of central idempotents of a symmetric monoidal category is isomorphic as a meet-semilattice to that of its strictification.
\end{abstract}

    \maketitle

\date{\today}

\setcounter{tocdepth}{1} 
\makeatletter
\def\l@subsection{\@tocline{2}{0pt}{2.5pc}{5pc}{}}
\def\l@subsubsection{\@tocline{2}{0pt}{5pc}{7.5pc}{}} 
\makeatother
\tableofcontents

\section{Introduction}

Monoidal categories are a natural setting for describing phenomena in quantum computation: They admit inbuilt notions of both parallel and sequential composition of operations on multipartite systems, allowing us to keep track of which operations are performed on such a system, and when \cite{cqt}. Even further, any monoidal category comes equipped with a notion of ``where," or on which part of a system, a given operation is performed; namely, a monoidal category $\C$ satisfying a mild condition called \textit{firmness} (defined in \cite{tensor-topology}) is equipped with a meet-semilattice $\zi(\C)$ of so-called central idempotents, and any morphism $f$ of $\C$ can be said to restrict to a unique smallest element of $\zi(\C)$, called its support, and denoted $\text{supp}(f)$. As in \cite[\S 5]{localisable-monads}, the lattice of central idempotents in a monoidal category $\C$ may be interpreted, for example, as a topological space of locations in a computer memory, or of the physical locations of a network of interacting agents. 

Quantum computation naturally exhibits concurrent behavior---for example, performing a local operation on one of an entangled pair of qubits affects the other member of the pair. It is therefore of interest in quantum computing applications to find structures in monoidal categories which model concurrency. Specifically, in the present work, we would like to find a structure which allows us to ensure that concurrent operations can exchange computational resources. This is meant in the sense that, for example, quantum teleportation is an exchange of a quantum state between communicating parties. In \cite{space-in-moncats}, the authors show that quantum teleportation, viewed as an operation on a bipartite system, can only occur if the two communicating qubits have non-disjoint support. Inspired by this example, we will say that two morphisms $f,g$ in a monoidal category $\C$ can \textit{exchange resources} where $\text{supp}(f)$ and $\text{supp}(g)$ overlap.

It is easy to see, since $\zi(\C)$ is a meet-semilattice when $\C$ is a firm monoidal category, that any two processes occurring in $\C$ can exchange resources. The difficulty arises in modeling resource exchange between a process $f$ occurring in a braided monoidal category $\X$ and a process $g$ occurring in a separate braided monoidal category $\Y$---such a situation might occur, for example, if we view $\X$ and $\Y$ as modeling the memories of two separate computers. In this case, we would like to construct a third monoidal category $\mathcal{W}$ containing isomorphic copies of $\X$ and $\Y$, such that the lattice $\zi(\mathcal{W})$ contains the central idempotent lattices of the two original categories, together with meets of the elements thereof. Constructing a $\mathcal{W}$ containing isomorphic copies of $\X$ and $\Y$ is thus part of the way toward allowing us to enforce the capacity for resource exchange. 

To this end, given strict symmetric monoidal categories $\X$ and $\Y$ both admitting strong symmetric monoidal functors from a symmetric monoidal category $\A$, we construct the pushout $\psh$ of $\X$ and $\Y$ over $\A$ in \textsection \ref{sec:pushout}. For completeness, we mention that similar constructions were outlined in \cite{macdonald-scull} for the case of general categories with monomorphisms between them; as well as in \cite{quantum-pi} for symmetric monoidal categories which both have the same objects; and in \cite{props} for PROPs. In \textsection \ref{sec:central-idempotents}, we show that $\zi(\psh)$ contains the central idempotent lattices of $\X$ and $\Y$; and that if $\C$ is merely symmetric monoidal, $\zi(\C)$ is lattice-isomorphic to the central idempotent lattice of the strictification of $\C$.

We defer to future work the question of whether $\X$ and $\Y$ being stiff (in the sense of \cite{sheaf-rep}) or firm, or having universal joins of central idempotents, implies that the pushout retains any of these properties. 

\subsection{Acknowledgements} This project was initiated as part of the ACT Adjoint School 2023. The first author thanks Chris Heunen, Nesta van der Schaaf, and Carmen Constantin for their guidance and feedback during and beyond the term of the school, and also thanks fellow group members Clem\'ence Chanavat, Ariadne Si Suo, and Luke Morris. The first and second authors acknowledge the financial support of the NSF MathQuantum Research Training Group at University of Maryland, College Park.

\section{Preliminaries}

For \textsection \ref{sec:pushout}, we assume familiarity with Lawvere theories and their algebras, for which a comprehensive reference can be found in \cite{adamek-rosicky}. In preparation for the discussion in \textsection \ref{sec:central-idempotents}, we recall the basic theory of central idempotents in braided monoidal categories. 

\begin{definition} Let $\C$ be a monoidal category and $U \in \ob(\C)$. A \textbf{half-braiding} on $U$ is a natural isomorphism $\sigma: U \otimes - \to - \otimes U$ such that for any $A, B \in \ob(\C)$, the following diagram commutes:
    \[
      \begin{tikzcd}
        {U \otimes (A \otimes B)} && {(U \otimes A) \otimes B} && {(A \otimes U) \otimes B} \\
        \\
        {(A \otimes B) \otimes U} && {A \otimes (B \otimes U)} && {A \otimes (U \otimes B)}
        \arrow["{{\alpha^{-1}_{U A B}}}", from=1-1, to=1-3]
        \arrow["{{\sigma_{A \otimes B}}}", from=1-1, to=3-1]
        \arrow["{{\sigma_A \otimes B}}", from=1-3, to=1-5]
        \arrow["{{\alpha_{A U B}}}", from=1-5, to=3-5]
        \arrow["{{\alpha^{-1}_{A B U}}}"', from=3-3, to=3-1]
        \arrow["{{A \otimes \sigma_B}}"', from=3-5, to=3-3]
      \end{tikzcd}
    \]
\end{definition}

\begin{definition}[Drinfel'd center]
    \cite[7.13]{Etingof_Gelaki_Nikshych_Ostrik_2015} Let $\C$ be a monoidal category. Then the \textbf{center} of $\C$ is the category $\Z(\C)$ whose:
    \begin{itemize}
        \item Objects are pairs $(U, \sigma)$ where $U \in \ob(\C)$ and $\sigma$ is a half-braiding on $U$,
        \item Morphisms $(U, \sigma) \to (V, \tau)$ are morphisms $f: U \to V$ in $\C$ that \textbf{respect half-braidings}, meaning that the following diagram commutes:
        \[
          \begin{tikzcd}
            {U \otimes A} && {V \otimes A} \\
            \\
            {A \otimes U} && {V \otimes U}
            \arrow["{f \otimes A}"{description}, from=1-1, to=1-3]
            \arrow["{\sigma_A}", from=1-1, to=3-1]
            \arrow["{\tau_A}", from=1-3, to=3-3]
            \arrow["{A \otimes f}"{description}, from=3-1, to=3-3]
          \end{tikzcd}
        \]
        \item Composition and identity morphisms are as in $\C$.
    \end{itemize}
\end{definition}

Recall that there is a canonical projection functor $\Z(\C) \to \C$ from the center of $\C$ which forgets half-braidings. For convenience, we also recall the following definition, given in \cite{sheaf-rep}.

\begin{definition} Let $\C$ be a monoidal category. Then a \textbf{central idempotent} in $\C$ is a morphism $u: (U, \sigma) \to (I, \rho^{-1} \circ \lambda)$ such that the following diagram commutes and the morphism $U \otimes U \to U$ shown is invertible:
    \[
      \begin{tikzcd}
        {U \otimes U} && {U \otimes I} \\
        \\
        {I \otimes U} && U
        \arrow["{U \otimes u}"{description}, from=1-1, to=1-3]
        \arrow["{u \otimes U}", from=1-1, to=3-1]
        \arrow["{\rho_U}", from=1-3, to=3-3]
        \arrow["{\lambda_U}"{description}, from=3-1, to=3-3]
      \end{tikzcd}
    \]
    Two central idempotents are identified if they are isomorphic in $\Z(\C)/I$, and we write $u \le v$ for central idempotents $u: (U, \sigma) \to I$ and $v: (V, \tau) \to I$ if there exists a morphism $u \to v$ in $\Z(\C)/I$. Such a morphism is necessarily unique.
\end{definition}

We will want to examine how monoidal functors interact with central idempotents in the categories of interest, for which we need the following notion.

\begin{definition}
    Let $\C$ and $\D$ be monoidal categories, $F: \C \to \D$ be a lax monoidal functor, $U \in \ob(\C)$, and $\sigma$ be a half-braiding on $U$. Then $F$ \textbf{preserves the half-braiding} $\sigma$ if there exists a half-braiding $\tau$ on $F(U)$ such that the following diagram commutes for all $A \in \ob(\C)$:
    \[
      \begin{tikzcd}
        {F(U) \otimes F(A)} && {F(U \otimes A)} \\
        \\
        {F(A) \otimes F(U)} && {F(A \otimes U)}
        \arrow["{\mu^F_{U A}}", from=1-1, to=1-3]
        \arrow["{\tau_{F(A)}}", from=1-1, to=3-1]
        \arrow["{F(\sigma_A)}", from=1-3, to=3-3]
        \arrow["{\mu^F_{A U}}", from=3-1, to=3-3]
      \end{tikzcd}
    \]
\end{definition}

\section{Pushout of strict symmetric monoidal categories} \label{sec:pushout}

Below, suppose given a diagram \begin{equation} \label{eq:diagram-for-pushout} \begin{tikzcd}
	\mathcal{X} & \mathcal{A} & \mathcal{Y}
	\arrow["G"', from=1-2, to=1-1]
	\arrow["F", from=1-2, to=1-3]
\end{tikzcd} 
\end{equation}
of strict symmetric monoidal categories and strict symmetric monoidal functors. Below, we let $\C$ denote any of the categories $\X$,$\Y$, or $\A$, and denote the monoidal unit, associator, left unitor, right unitor, and braiding on $\C$ by $I_\C, \alpha^\C$, $\lambda^\C$, $\rho^\C$, and $\sigma^\C$, respectively. We will construct the pushout $\psh$ of this diagram in several stages.

\subsection{Objects}

Because we are considering only strict monoidal categories, the sets of objects form genuine monoids under the monoidal product. Thus, we can construct $\ob(\psh)$ as a monoid, which we will see is the pushout of $\ob(\X) \xleftarrow{G} \ob(\A) \xrightarrow{F} \ob(\Y)$ in the category of monoids, as described in \cite{bourbaki}. For simplicity, we will start with the pushout in the category of semigroups:

\begin{definition}
  Let $\obs$ be the quotient of the free semigroup with operation $\ntns$ on $\ob(\X) \sqcup \ob(\Y)$ under the congruence relation generated by the identifications: 
  \begin{romanlist}
    \item $a \ntns b \sim a \otimes b$ for any $a , b \in \ob(\C)$, $\C \in \{\X , \Y\}$
    \item $F(a) \sim G(a)$ for any $a \in \ob(\A)$.
  \end{romanlist}
\end{definition}

Observe that $I_{\X} = G(I_{\A})$ and $I_{\Y} = F(I_{\A})$, so $I_{\X} = I_{\Y}$ in $\obs$. We will denote this element simply by $I$. We expect $I$ to serve as the tensor unit in $\psh$, so we need the following property:

\begin{proposition}
  $I$ is an identity in $\obs$.
\end{proposition}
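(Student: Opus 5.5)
To show that $I$ is a two-sided identity in $\obs$, it suffices to check $I \ntns w = w = w \ntns I$ for every $w \in \obs$. Every element of $\obs$ is the class of a word $x_1 \ntns x_2 \ntns \cdots \ntns x_n$ with $n \ge 1$ and each $x_i \in \ob(\X) \sqcup \ob(\Y)$. The congruence is compatible with $\ntns$, so it is enough to show $I \ntns x \sim x$ and $x \ntns I \sim x$ for a single generator $x$. Indeed, then
\[
I \ntns (x_1 \ntns \cdots \ntns x_n) = (I \ntns x_1) \ntns x_2 \ntns \cdots \ntns x_n \sim x_1 \ntns \cdots \ntns x_n
\]
by associativity of the free semigroup, and the right-hand case is symmetric, using $x_n \ntns I \sim x_n$.

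For a generator $x$, we use the representative of $I$ that lives in the same category as $x$. If $x \in \ob(\X)$, we take $I = I_\X$. Relation (i) gives $I_\X \ntns x \sim I_\X \otimes x$, and strictness of $\X$ gives $I_\X \otimes x = x$ as objects of $\X$. Hence $I_\X \ntns x \sim x$, and likewise $x \ntns I_\X \sim x \otimes I_\X = x$.

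If instead $x \in \ob(\Y)$, we use $I = I_\Y$ and repeat the argument in $\Y$. This is legitimate because $I_\X \sim I_\Y$: by strictness of $G$ and $F$ we have $I_\X = G(I_\A)$ and $I_\Y = F(I_\A)$, and relation (ii) identifies $G(I_\A) \sim F(I_\A)$.

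The only delicate point is this choice of representative. Relation (i) only allows contracting $a \ntns b$ when $a$ and $b$ lie in the same category, so before contracting we must replace $I$ by $I_\X$ or $I_\Y$ according to where the generator lives. Relation (ii) via the unit of $\A$ is exactly what permits that replacement. Everything else follows from the congruence being closed under $\ntns$.
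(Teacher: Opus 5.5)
Your proof is correct and matches the paper's argument: write an element as a word $a_1 \ntns \cdots \ntns a_n$, replace $I$ by the unit of the category containing $a_1$ (respectively $a_n$), and contract using relation (i) together with strictness. Your justification that $I_\X \sim I_\Y$ via relation (ii) and strictness of $F$ and $G$ is the same observation the paper makes just before stating the proposition.
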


\begin{proof}
  Let $a = a_1 \ntns \ldots \ntns a_n \in \obs$, where each $a_i$ is in $\ob(\X) \sqcup \ob(\Y)$. Let $\C$ be the category that $a_1$ is in and $\D$ be the category that $a_n$ is in. Then
  \[
    I \ntns a = I_{\C} \ntns a_1 \ntns \ldots \ntns a_n = (I_{\C} \ntns a_1) \ntns \ldots \ntns a_n = a_1 \ntns \ldots \ntns a_n = a .
  \]
  Similarly,
  \[
    a \ntns I = a_1 \ntns \ldots \ntns a_n \ntns I_{\D} = a_1 \ntns \ldots \ntns (a_n \ntns I_{\D}) = a_1 \ntns \ldots \ntns a_n = a .
  \]
\end{proof}

\subsection{Morphisms}

The morphisms of $\psh$ are much more complex than the objects, being constructed via both the tensor product operation and the partial operation of composition. We must also account for braidings that do not arise as braidings from either $\X$ or $\Y$; that is, braidings $\sigma_{a b}$ where the expressions $a$ and $b$ in $\obs$ do not reduce to single objects in the same category. 

We wish to think of morphisms in $\psh$ as formal expressions in terms of the morphisms in $\X$ and $\Y$, additional braidings $\sigma_{a b}$, monoidal product, and composition, subject to the appropriate identifications to ensure that the symmetric monoidal category axioms are satisfied and that the desired pushout square commutes. Allowing arbitrary expressions, however, would result in malformed expressions, where morphisms are composed that should not be composable. Selecting only the well-formed expressions requires some work, and care must be taken to avoid accidentally identifying well-formed expressions with malformed ones.

For example, consider a monoidal category with objects $A, B, C$ such that $A \otimes B \neq A$ and morphisms $f$ with source $A \otimes B$, $g$ with target $C$, $h$ with target $A$, and $k$ with target $B \otimes C$. Then the expression $(f \otimes g) \circ (h \otimes k)$ is well-formed up to associativity, but blindly applying the usual distributivity formula yields the malformed expression $(f \circ h) \otimes (g \circ k)$, since $f$ and $h$ are not composable.

To this end, we begin with a highly simplified first approximation of $\mor(\psh)$ that will allow us to identify well-formed expressions and determine their sources and targets. Henceforth, we will refer to the data of an expression's well-formedness, along with its source and target if it is well-formed, as its \textbf{type}. 

First, we define an algebraic theory that allows us to define and manipulate the expressions we need without too much trouble. We will use the symbols $\ncmp$ and $\ntns$ instead of $\circ$ and $\otimes$, respectively, to avoid confusion with genuine composition and monoidal product:

\begin{definition}
  Let $\thr$ be the algebraic theory with two binary operations $\ncmp$ and $\ntns$ that are both associative.
\end{definition}

\begin{definition}
  Let $\pmaf$ be the free $\thr$-algebra generated by the following symbols:

  \begin{romanlist}
    \item a symbol $f$ for each $f \in \mor(\X) \sqcup \mor(\Y)$
    \item a symbol $\sigma_{a b}$ for each $a , b \in \obs$.
  \end{romanlist}
\end{definition}

\subsubsection{Expression Types}

We can now leverage the properties of $\thr$-algebras and their homomorphisms to easily determine the type of an expression. We will define a new $\thr$-algebra of $(\text{source},\text{target})$ pairs. The source and target of a tensor product of morphisms needs to be the respective tensor product of their sources and targets, and this does not depend on which morphisms are being tensored. 

The source and target of a composite of morphisms needs to be the familiar ``outermost" source and target of the constituent morphisms, but unlike with the monoidal product, not all morphisms are composable. Therefore, some expressions involving $\ncmp$ will result in malformed expressions. To deal with these, we introduce an element $\bot$ to represent them. Any expression with a malformed part is malformed, so we make $\bot$ absorptive.

\begin{definition}
  Let $\obt$ be the $\thr$-algebra on $\obs^2 \sqcup \{\bot\}$ with operations defined by:
  \begin{gather*}
    (a , b) \ntns (c , d) := (a \ntns c , b \ntns d) \\
    a \ntns \bot := \bot \ntns a := a \ncmp \bot := \bot \ncmp a := \bot \\
    (c , d) \ncmp (a , b) :=
    \begin{cases}
      (a , d) \quad & \text{if } b = c \text{ in } \obs \\
      \bot \quad & \text{if } b \neq c \text{ in } \obs .
    \end{cases}
  \end{gather*}
\end{definition}

\begin{proof}[Proof that $\obt$ is a $\thr$-algebra]
  Observe that $\obs^2$ is closed under $\ntns$, and $\ntns$ coincides on $\obs^2$ with the regular semigroup product operation, so it is associative on $\obs^2$. Outside of $\obs^2$, the only value of $\ntns$ is $\bot$, since the latter is absorptive. Thus, $\ntns$ is associative.

  Next, let $x , y , z \in \obt$. If $\bot \in \{x , y , z\}$, then $(x \ncmp y) \ncmp z = \bot = x \ncmp (y \ncmp z)$. Otherwise, there exist
  \[
    x_1 , x_2 , y_1 , y_2 , z_1 , z_2 \in \obs
  \]
 such that 
  \[
    x = (x_1 , x_2), \qquad y = (y_1 , y_2), \qquad z = (z_1 , z_2).
  \]
  The four cases are shown in the table below: 
  \begin{center}
    \begin{tabular}{c|cc|cc}
      & \multicolumn{2}{|c}{$y_2 = x_1$} & \multicolumn{2}{|c}{$y_2 \neq x_1$} \\
      & $z_2 = y_1$ & $z_2 \neq y_1$ & $z_2 = y_1$ & $z_2 \neq y_1$ \\
      \hline
      $x \ncmp y$ & $(y_1, x_2)$ & $(y_1, x_2)$ & $\bot$ & $\bot$ \\
      $y \ncmp z$ & $(z_1, y_2)$ & $\bot$ & $(z_1, y_2)$ & $\bot$ \\
      $(x \ncmp y) \ncmp z$ & $(z_1, x_2)$ & $\bot$ & $\bot$ & $\bot$ \\
      $x \ncmp (y \ncmp z)$ & $(z_1, x_2)$ & $\bot$ & $\bot$ & $\bot$
    \end{tabular}
  \end{center}
  Observe that the last two rows coincide, so $\ncmp$ is associative.
\end{proof}

We can now define the type of each expression using a $\thr$-algebra homomorphism, which is guaranteed to exist since $\pmaf$ is free:

\begin{definition}
  Let $\tp : \pmaf \to \obt$ be the $\thr$-algebra homomorphism defined by: 

  \begin{romanlist}
    \item $f \mapsto (a , b)$ for each $f : a \to b$ in $\mor(\X) \sqcup \mor(\Y)$
    \item $\sigma_{a b} \mapsto (a \ntns b , b \ntns a)$ for each $a , b \in \obs$.
  \end{romanlist}
\end{definition}

\subsubsection{Identity Morphisms}

There is one last component we need to define before we can state the necessary identifications on $\pmaf$: identity morphisms of arbitrary objects in $\psh$. We will refine our approximation slightly employing only the identifications needed to define these identity morphisms; these identifications correspond neatly to those made when defining $\obs$:

\begin{definition}
  Let $\pmas$ be the quotient of $\pmaf$ under the congruence relation generated by the identifications: 
  \begin{romanlist}
    \item $f \ntns g \sim f \otimes g$ for any $f , g \in \mor(\C)$, $\C \in \{\X , \Y\}$
    \item $F(f) \sim G(f)$ for any $f \in \mor(\A)$.
  \end{romanlist}
\end{definition}

\begin{definition}
  Let $\id_{-} : (\obs , \ntns) \to (\pmas, \ntns)$ be the semigroup homomorphism where $\id_a := \id_a^{\C}$ for any $a \in \ob(\C)$, $\C \in \{\X , \Y\}$.
\end{definition}

\begin{proof}[Proof that $\id_{-}$ is a semigroup homomorphism]
  Let $\C \in \{\X , \Y\}$ and $a , b \in \ob(\C)$. Then 
  \[
    \id_a \ntns \id_b = \id_a^{\C} \ntns \id_b^{\C} = \id_a^{\C} \otimes \id_b^{\C} = \id_{a \otimes b}^{\C} = \id_{a \otimes b} .
  \]
  For any $a \in \ob(\A)$, observe that 
  \[
    \id_{F(a)} = \id_{F(a)}^{\C} = \id_{G(a)}^{\C} = \id_{G(a)} .
    \qedhere
  \]
\end{proof}

We must also check that our new identifications are compatible with the assignment of types defined above:

\begin{proposition}
  The map $\tp : \pmaf \to \obt$ descends to the quotient $\pmas$.
\end{proposition}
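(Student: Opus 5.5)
The plan is to use the universal property of quotients of $\thr$-algebras. The kernel of $\tp$, meaning the relation $x \approx y$ iff $\tp(x) = \tp(y)$, is a congruence on $\pmaf$, because $\tp$ is a $\thr$-algebra homomorphism. Therefore $\tp$ factors through the quotient by a congruence exactly when that congruence is contained in the kernel. Since the congruence defining $\pmas$ is the one generated by the identifications (i) and (ii), it is enough to check that $\tp$ takes equal values on the two sides of each generating identification. The congruence closure then comes for free.

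For identification (i), I take $\C \in \{\X, \Y\}$ and morphisms $f : a \to b$ and $g : c \to d$ in $\C$. On one side, since $\tp$ is a homomorphism,
\[
\tp(f \ntns g) = \tp(f) \ntns \tp(g) = (a , b) \ntns (c , d) = (a \ntns c , b \ntns d).
\]
On the other side, $f \otimes g : a \otimes c \to b \otimes d$ is a single generator of $\pmaf$ coming from $\mor(\C)$, so $\tp(f \otimes g) = (a \otimes c , b \otimes d)$. These agree because, by identification (i) in the definition of $\obs$, we have $a \ntns c = a \otimes c$ and $b \ntns d = b \otimes d$ in $\obs$.

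For identification (ii), I take $f : a \to b$ in $\mor(\A)$. Then
\[
\tp(F(f)) = (F(a) , F(b)) \quad\text{and}\quad \tp(G(f)) = (G(a) , G(b)),
\]
where I am using that $F$ and $G$ are functors, so they preserve sources and targets. By identification (ii) in the definition of $\obs$, we have $F(a) = G(a)$ and $F(b) = G(b)$ in $\obs$, so the two types agree.

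Neither check involves $\bot$ or the composition $\ncmp$, so there is no real obstacle. The only point needing care is the structural one: noting that the kernel of a homomorphism is a congruence, so that checking the generators suffices. Once that is in place, $\tp$ descends to a well-defined $\thr$-algebra homomorphism $\pmas \to \obt$.
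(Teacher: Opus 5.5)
Your proposal is correct and follows the paper's proof. Both check the two generating identifications, using $a \ntns c = a \otimes c$ and $F(a) = G(a)$ in $\obs$. The only difference is that you state explicitly the standard fact the paper uses implicitly: the kernel of a $\thr$-algebra homomorphism is a congruence, so checking the generating identifications suffices.
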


\begin{proof}
  For any $\C \in \{\X , \Y\}$ and $f : a \to b$ and $g : c \to d$ in $\C$, 
  \[
    \tp(f \ntns g) = \tp(f) \ntns \tp(g) = (a , b) \ntns (c , d) = (a \ntns c , b \ntns d) = (a \otimes c , b \otimes d) = \tp(f \otimes g).
  \]
  Now, let $f : a \to b$ in $\A$. Then 
  \[
    \tp(F(f)) = (F(a), F(b)) = (G(a), G(b)) = \tp(G(f)).
  \]
  Therefore, $\tp$ induces a well-defined $\thr$-algebra homomorphism $\pmas \to \obt$.
\end{proof}

We need the identity morphism of any object $a$ in $\psh$ to be a well-formed expression with source and target $a$, and indeed this is the case:
 
\begin{proposition}
  For any $a \in \obs$, we have $\tp(\id_a) = (a , a)$.
\end{proposition}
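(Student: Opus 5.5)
The plan is to compare two semigroup homomorphisms out of $(\obs, \ntns)$ and show they coincide. The first is the composite $\tp \circ \id_{-} : \obs \to \obt$. It is a semigroup homomorphism because $\id_{-}$ is one by the preceding definition, and $\tp$ descends to a $\thr$-algebra homomorphism $\pmas \to \obt$ by the preceding proposition; in particular $\tp$ preserves $\ntns$. The second is the diagonal map $\Delta : \obs \to \obt$, $a \mapsto (a,a)$. It is a semigroup homomorphism because $\ntns$ on $\obs^2 \subseteq \obt$ is computed componentwise: $(a,a) \ntns (b,b) = (a \ntns b, a \ntns b)$.

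Since $\obs$ is a quotient of the free semigroup on $\ob(\X) \sqcup \ob(\Y)$, it is generated as a semigroup by the images of the single objects $a \in \ob(\X) \sqcup \ob(\Y)$. Two semigroup homomorphisms out of $\obs$ that agree on a generating set agree everywhere. So it suffices to check generators.

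For $a \in \ob(\C)$ with $\C \in \{\X,\Y\}$, we have $\id_a = \id_a^{\C}$, which is a genuine morphism $a \to a$ in $\C$. By clause (i) of the definition of $\tp$, $\tp(\id_a^{\C}) = (a,a) = \Delta(a)$. Hence $\tp(\id_a) = (a,a)$ for every $a \in \obs$.

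The argument is essentially routine. The only point needing care is that everything is well defined on equivalence classes. The homomorphism property of $\id_{-}$ and the descent of $\tp$ to $\pmas$ are supplied by the earlier results. It also matters that the generators-agreement argument is run on $\obs$ itself, a quotient of a free semigroup, rather than on representatives, so no separate check against the congruence is needed.

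Alternatively, one can argue by induction on the length $n$ of a representative $a = a_1 \ntns \cdots \ntns a_n$. Then
\[ \tp(\id_a) = \tp(\id_{a_1}) \ntns \cdots \ntns \tp(\id_{a_n}) = (a_1,a_1) \ntns \cdots \ntns (a_n,a_n) = (a,a), \]
and independence of the chosen representative follows because $\tp(\id_a)$ depends only on the class of $a$.
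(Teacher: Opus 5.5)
Your proof is correct and is essentially the paper's argument. The paper checks the base case $\tp(\id^{\C}_a) = (a,a)$ for $a \in \ob(\C)$, then inducts using $\tp(\id_{a \ntns b}) = \tp(\id_a) \ntns \tp(\id_b)$; this is exactly your comparison of the semigroup homomorphisms $\tp \circ \id_{-}$ and the diagonal on the generators of $\obs$.
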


\begin{proof}
  Observe that for any $\C \in \{\X , \Y\}$ and $a \in \ob(\C)$, 
  \[
    \tp(\id_a) = \tp(\id_a^c) = (a , a).
  \]
  Now, let $a , b \in \obs$ and assume that $\tp(\id_a) = (a,a)$ and $\tp(\id_b) = (b,b)$. Then 
  \[
    \tp(\id_{a \ntns b}) = \tp(\id_a \ntns \id_b) = \tp(\id_a) \ntns \tp(\id_b) = (a , a) \ntns (b , b) = (a \ntns b , a \ntns b).
  \]
  By induction, this concludes the proof. 
\end{proof}

\subsubsection{Algebra of Morphisms}

We are now ready to make all of the necessary identifications:

\begin{definition}
  Let $\pma$ be the quotient of $\pmas$ under the congruence relation generated by the following identifications, for any $\C \in \{\X, \Y\}$, $x, y \in \ob(\C)$, $\phi, \psi \in \mor(\C)$, $a, b, c, d \in \obs$, and $f, g, h, k \in \pmaf$: 
  \begin{romanlist}
    \item $\phi \ncmp \psi \sim \phi \circ \psi$ if $\phi$ and $\psi$ are composable in $\C$, \label{composition}
    \item $(f \ncmp g) \ntns (h \ncmp k) \sim (f \ntns h) \ncmp (g \ntns k)$ if $\tp(f \ncmp g), \tp(h \ncmp k) \neq \bot$, \label{distributivity}
    \item $\sigma_{x y} \sim \sigma_{x y}^{\C}$, \label{braidings}
    \item $\sigma_{a b} \ncmp \id_{a \ntns b} \sim \sigma_{a b}$, \label{identity}
    \item $\id_I \ntns \sigma_{a b} \sim \sigma_{a b} \ntns \id_I \sim \sigma_{a b}$, \label{unit}
    \item $(g \ntns f) \ncmp \sigma_{a b} \sim \sigma_{c d} \ncmp (f \ntns g)$ if $\tp(f) = (a , c)$ and $\tp(g) = (b , d)$, \label{naturality}
    \item $\sigma_{a (b \ntns c)} \sim (\id_b \ntns \sigma_{a c}) \ncmp (\sigma_{a b} \ntns \id_c)$, \label{hexagon}
    \item $\sigma_{b a} \ncmp \sigma_{a b} \sim \id_{a \ntns b}$. \label{symmetry}
  \end{romanlist}
\end{definition}

The first thing to do is check that none of these identifications interferes with our type assignments: 

\begin{proposition}
  The map $\tp : \pmaf \to \obt$ descends to the quotient $\pma$. 
\end{proposition}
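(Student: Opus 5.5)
Since $\pma$ is the quotient of $\pmas$ by the congruence generated by the identifications (i)--(viii), and $\tp$ is already a $\thr$-algebra homomorphism $\pmas \to \obt$ by the earlier proposition, it suffices to check that $\tp$ takes equal values on the two sides of each generating identification. The kernel of a $\thr$-algebra homomorphism is a congruence. If it contains all the generating pairs, it contains the congruence they generate, so $\tp$ factors through $\pma$. I will go through (i)--(viii) in turn, using throughout that $\tp$ is a homomorphism and the earlier result $\tp(\id_a) = (a,a)$.

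The unconditional relations are routine computations in $\obt$.
\begin{itemize}
\item For (i), if $\phi : y \to z$ and $\psi : x \to y$ are composable in $\C$, then $\tp(\phi \ncmp \psi) = (y,z) \ncmp (x,y) = (x,z) = \tp(\phi \circ \psi)$.
\item For (iii), both sides have type $(x \otimes y, y \otimes x)$, since $x \ntns y = x \otimes y$ in $\obs$.
\item For (iv), $(a \ntns b, b \ntns a) \ncmp (a \ntns b, a \ntns b) = (a \ntns b, b \ntns a)$.
\item For (v), tensoring with $\id_I$ contributes $(I,I)$, and $I$ is an identity in $\obs$ by the earlier proposition.
\item For (vii), the right-hand side is $(b \ntns a \ntns c, b \ntns c \ntns a) \ncmp (a \ntns b \ntns c, b \ntns a \ntns c) = (a \ntns b \ntns c, b \ntns c \ntns a)$, which is the type of $\sigma_{a(b\ntns c)}$.
\item For (viii), $(b \ntns a, a \ntns b) \ncmp (a \ntns b, b \ntns a) = (a \ntns b, a \ntns b) = \tp(\id_{a\ntns b})$.
\end{itemize}

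The conditional relations (ii) and (vi) are where care is needed, since the conditions exist precisely to keep $\bot$ from being identified with a well-formed type.

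For (vi), write $\tp(f) = (a,c)$ and $\tp(g) = (b,d)$. The left side is $(b \ntns a, d \ntns c) \ncmp (a \ntns b, b \ntns a) = (a \ntns b, d \ntns c)$. The right side is $(c \ntns d, d \ntns c) \ncmp (a \ntns b, c \ntns d) = (a \ntns b, d \ntns c)$.

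For (ii), the hypothesis $\tp(f \ncmp g) \neq \bot$ forces $\tp(f) = (p,q)$ and $\tp(g) = (o,p)$ for some $o,p,q \in \obs$. Similarly $\tp(h) = (s,t)$ and $\tp(k) = (r,s)$. Then the left side has type $(o,q) \ntns (r,t) = (o \ntns r, q \ntns t)$. The right side is $(p \ntns s, q \ntns t) \ncmp (o \ntns r, p \ntns s)$. This equals the same pair because the middle objects agree on the nose.

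The main obstacle is thus (ii): one must use the well-formedness hypothesis to extract matching middle objects. Without it, for example in the example preceding the definition of $\thr$, the right side could be well-formed while the left is $\bot$, and the map would not descend. With all eight relations checked, $\tp$ induces a well-defined $\thr$-algebra homomorphism $\pma \to \obt$.
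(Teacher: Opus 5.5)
Your proposal is correct and follows essentially the same route as the paper: you check that $\tp$, already a homomorphism on $\pmas$, takes equal values on both sides of each of the eight generating identifications. In (ii) you use the well-formedness hypothesis to extract matching middle objects, exactly as the paper does via $f_1 = g_2$ and $h_1 = k_2$. Your remark that the kernel of a $\thr$-algebra homomorphism is a congruence, so checking the generating pairs suffices, makes explicit a step the paper leaves implicit.
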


\begin{proof}
  Let $\C \in \{\X , \Y\}$. Then for any $g : a \to b$ and $f : b \to c$ in $\C$, 
  \[
    \tp(f \ncmp g) = \tp(f) \ncmp \tp(g) = (b , c) \ncmp (a , b) = (a , c) = \tp(f \circ g).
  \]
  Next, for any $f , g , h , k$ in $\pmaf$ satisfying $\tp(f \ncmp g), \tp(h \ncmp k) \neq \bot$, we will show that 
  \[
    \tp((f \ncmp g) \ntns (h \ncmp k)) = \tp((f \ntns h) \ncmp (g \ntns k)).
  \]
  Since $\bot$ is absorptive, we have $\tp(\beta) \neq \bot$ for each $\beta \in \{f , g , h , k\}$ so there exist $\beta_1 , \beta_2 \in \obs$ such that $\tp(\beta) = (\beta_1 , \beta_2)$. Additionally, $f_1 = g_2$ and $h_1 = k_2$, so 
  \[
    \begin{split}
      \tp((f \ncmp g) \ntns (h \ncmp k)) & = (\tp(f) \ncmp \tp(g)) \ntns (\tp(h) \ncmp \tp(k)) \\
      & = ((f_1 , f_2) \ncmp (g_1 , g_2)) \ntns ((h_1 , h_2) \ncmp (k_1 , k_2)) \\
      & = (g_1 \ntns k_1 , f_2 \ntns h_2) \\
      & = (f_1 \ntns h_1 , f_2 \ntns h_2) \ncmp (g_1 \ntns k_1 , g_2 \ntns k_2) \\
      & = (\tp(f) \ntns \tp(h)) \ncmp (\tp(g) \ntns \tp(k)) \\
      & = \tp((f \ntns h) \ncmp (g \ntns k)).
    \end{split}
  \]
  Let $\C \in \{\X , \Y\}$ and $a , b \in \ob(\C)$. Then 
  \[
    \tp(\sigma_{a b}) = (a \ntns b , b \ntns a) = (a \otimes b , b \otimes a) = \tp(\sigma_{a b}^{\C}).
  \]
  For any $a , b \in \obs$,
  \[
      \tp(\sigma_{a b} \ncmp \id_{a \ntns b}) = (a \ntns b , b \ntns a) \ncmp (a \ntns b , a \ntns b) = (a \ntns b , b \ntns a) = \tp(\sigma_{a b}).
  \]
  Additionally,
  \[
    \tp(\sigma_{a b} \ntns \id_I) = (a \ntns b , b \ntns a) \ntns (I , I) = (a \ntns b , b \ntns a) = (I , I) \ntns (a \ntns b , b \ntns a) = \tp(\id_I \ntns \sigma_{a b}).
  \]
  and $(a \ntns b , b \ntns a) = \tp(\sigma_{a b})$. Now, let $a , b , c , d \in \obs$ and $f , g \in \pmaf$ satisfy $\tp(f) = (a , c)$ and $\tp(g) = (b , d)$. Then 
  \[
    \begin{split}
      \tp((g \ntns f) \ncmp \sigma_{a b}) & = (\tp(g) \ntns \tp(f)) \ncmp \tp(\sigma_{a b}) \\
      & = ((b , d) \ntns (a , c)) \ncmp (a \ntns b , b \ntns a) \\
      & = (a \ntns b , d \ntns c) \\
      & = (c \ntns d , d \ntns c) \ncmp ((a , c) \ntns (b , d)) \\
      & = \tp(\sigma_{c d}) \ncmp (\tp(f) \ntns \tp(g)) \\
      & = \tp(\sigma_{c d} \ncmp (f \ntns g)).
    \end{split}
  \]
  Next, let $a , b , c \in \obs$. Then
  \[
    \begin{split}
      \tp((\id_b \ntns \sigma_{a c}) \ncmp (\sigma_{a b} \ntns \id_c)) & = (\tp(\id_b) \ntns \tp(\sigma_{a c})) \ncmp (\tp(\sigma_{a b}) \ntns \tp(\id_c)) \\
      & = ((b , b) \ntns (a \ntns c , c \ntns a)) \ncmp ((a \ntns b , b \ntns a) \ntns (c , c)) \\
      & = (b \ntns a \ntns c , b \ntns c \ntns a) \ncmp (a \ntns b \ntns c , b \ntns a \ntns c) \\
      & = (a \ntns b \ntns c , b \ntns c \ntns a) = \tp(\sigma_{a (b \ntns c)}).
    \end{split}
  \]
  Additionally,
  \[
    \tp(\sigma_{b a} \ncmp \sigma_{a b}) = \tp(\sigma_{b a}) \ncmp \tp(\sigma_{a b}) = (b \ntns a , a \ntns b) \ncmp (a \ntns b , b \ntns a) = (a \ntns b , a \ntns b) = \tp(\id_{a \ntns b}).
  \]
  Thus, $\tp$ induces a well-defined $\thr$-algebra homomorphism $\pma \to \obt$. 
\end{proof}

\subsection{Construction of the category}

Equipped with our now-faithful description of morphism in $\psh$ we are finally ready to define it as a category:

\begin{definition}
Let $\psh$ be the category whose 
  \begin{itemize}
    \item set of objects is $\obs$,
    \item set of morphisms $a \to b$ is $\tp^{-1} (a , b) \subseteq \pma$ for any $a , b \in \obs$
    \item composition is $\ncmp$
    \item identity on $a$ is $\id_a$ for any $a \in \obs$.
  \end{itemize}
\end{definition}

\begin{proof}[Proof that $\psh$ is a category]
  Let $f : b \to c$ and $g : a \to b$ in $\psh$. Then 
  \[
    \tp(f \ncmp g) = \tp(f) \ncmp \tp(g) = (b , c) \ncmp (a , b) = (a , c),
  \]
  so $f \ncmp g : a \to c$. Thus, composition is well-defined. Notice that composition in $\psh$ is associative by definition.

  Next, we will use induction to show that for any $f : a \to b$ in $\psh$, we have $\id_b \ncmp f = f \ncmp \id_a = f$. For any $\C \in \{\X , \Y\}$ and $f : a \to b$ in $\C$, we have $f : a \to b$ in $\psh$ and 
  \[
    \id_b \ncmp f = \id_b^{\C} \ncmp f = \id_b^{\C} \circ f = f = f \circ \id_a^{\C} = f \ncmp \id_a^{\C} = f \ncmp \id_a .
  \]
  Additionally, for any $a , b \in \obs$, 
  \[
    \id_{b \ntns a} \ncmp \sigma_{a b} = (\id_b \ntns \id_a) \ncmp \sigma_{a b} = \sigma_{a b} \ncmp (\id_a \ntns \id_b) = \sigma_{a b} \ncmp \id_{a \ntns b} = \sigma_{a b} .
  \]
  Next, for any $f : a \to b$ and $g : b \to c$ in $\psh$ such that $f \ncmp \id_a = f$ and $\id_c \ncmp g = g$, we have $\tp(g \ncmp f) = (a , c)$. Thus, 
  \[
    \id_c \ncmp g \ncmp f = g \ncmp f = g \ncmp f \ncmp \id_a .
  \]
  Finally, for any $f : a \to b$ and $g : c \to d$ in $\psh$ such that $\id_b \ncmp f = f \ncmp \id_a = f$ and $\id_d \ncmp g = g \ncmp \id_c = g$, we have 
  \[
    \tp(\id_b \ncmp f) = \tp(\id_b) \ncmp \tp(f) = (b , b) \ncmp (a , b) = (a , b) \neq \bot
  \]
  and 
  \[
    \tp(\id_d \ncmp g) = \tp(\id_d) \ncmp \tp(g) = (d , d) \ncmp (a , d) = (a , d) \neq \bot .
  \]
  Therefore, 
  \[
    \id_{b \ntns d} \ncmp (f \ntns g) = (\id_b \ntns \id_d) \ncmp (f \ntns g) = (\id_b \ncmp f) \ntns (\id_d \ncmp g) = f \ntns g .
  \]
  Similarly, $\tp(f \ncmp \id_a) = (a , b) \neq \bot$ and $\tp(g \ncmp \id_c) = (c , d) \neq \bot$, so 
  \[
    (f \ntns g) \ncmp \id_{a \ntns c} = (f \ntns g) \ncmp (\id_a \ntns \id_c) = (f \ncmp \id_a) \ntns (g \ncmp \id_c) = f \ntns g .
  \]
  Hence, we may conclude that for each $a \in \obs$, the morphism $\id_a$ is an identity in $\psh$. 
\end{proof}

\subsection{Monoidal structure}

We now verify that we have the desired monoidal structure on $\psh$. First, we need to check that the monoidal product is functorial:

\begin{proposition}
  The operation $\ntns$ on $\obs$ and $\pma$ forms a functor $(\psh)^2 \to \psh$. 
\end{proposition}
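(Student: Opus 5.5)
To prove that $\ntns$ is a functor $(\psh)^2 \to \psh$, I will check three things in turn: that $\ntns$ is well defined on the hom-sets of the product category, that it preserves identities, and that it preserves composition (the interchange law).

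\textbf{Well-definedness.} On objects the assignment is $(a,c) \mapsto a \ntns c$, the semigroup product on $\obs$. On morphisms, take $f : a \to b$ and $g : c \to d$ in $\psh$, so that $\tp(f) = (a,b)$ and $\tp(g) = (c,d)$. Since $\tp$ descends to a $\thr$-algebra homomorphism $\pma \to \obt$ (as shown above), we get
\[
  \tp(f \ntns g) = \tp(f) \ntns \tp(g) = (a \ntns c, b \ntns d).
\]
Hence $f \ntns g \in \tp^{-1}(a \ntns c, b \ntns d)$, i.e.\ $f \ntns g : a \ntns c \to b \ntns d$. The operation $\ntns$ is well defined on $\pma$ because $\pma$ is a quotient of $\pmaf$ by a congruence, so no representative-dependence arises.

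\textbf{Identities.} Preservation of identities is immediate from the fact that $\id_{-}$ is a semigroup homomorphism $(\obs,\ntns) \to (\pmas,\ntns)$. Composing with the quotient map $\pmas \to \pma$ gives $\id_a \ntns \id_c = \id_{a \ntns c}$ in $\pma$, which is exactly $\ntns(\id_{(a,c)}) = \id_{\ntns(a,c)}$.

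\textbf{Composition.} Given composable pairs $(f,g) : (a,c) \to (b,d)$ and $(f',g') : (b,d) \to (e,h)$ in $(\psh)^2$, I need
\[
  (f' \ncmp f) \ntns (g' \ncmp g) = (f' \ntns g') \ncmp (f \ntns g).
\]
This is precisely identification \ref{distributivity}, provided its side condition holds. Because $f'$ and $f$ are composable in $\psh$, we have $\tp(f' \ncmp f) = (a,e) \neq \bot$, and likewise $\tp(g' \ncmp g) = (c,h) \neq \bot$, so the relation applies.

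The main obstacle is a subtlety about representatives. Relation \ref{distributivity} is stated for elements of $\pmaf$, with a typing condition, whereas our morphisms are classes in $\pma$. I will resolve this by choosing representatives $\tilde f, \tilde f', \tilde g, \tilde g' \in \pmaf$ of the four classes. Since $\tp$ on $\pma$ is induced from $\tp$ on $\pmaf$, these representatives have the same types, so $\tp(\tilde f' \ncmp \tilde f) \neq \bot$ and $\tp(\tilde g' \ncmp \tilde g) \neq \bot$. The relation therefore holds for the representatives in $\pmaf$, and passing to the quotient $\pma$ gives the required equality there. Once this is checked, functoriality follows.
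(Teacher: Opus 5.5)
Your proof is correct and matches the paper's argument step for step: the type computation $\tp(f \ntns g) = \tp(f) \ntns \tp(g)$ gives well-definedness on hom-sets, the homomorphism property of $\id_{-}$ gives preservation of identities, and identification \ref{distributivity} gives preservation of composition. Your explicit check of the side condition $\tp(f' \ncmp f), \tp(g' \ncmp g) \neq \bot$ and your lift to representatives in $\pmaf$ spell out details the paper leaves implicit.
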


\begin{proof}
  Let $f : a \to b$ and $g : c \to d$ in $\psh$. Then 
  \[
    \tp(f \ntns g) = \tp(f) \ntns \tp(g) = (a , b) \ntns (c , d) = (a \ntns c , b \ntns d),
  \]
  so $f \ntns g : a \ntns c \to b \ntns d$. Observe that $\ntns$ preserves composition by identification \ref{distributivity} above, and it preserves identities since $\id_{-}$ is defined to be a homomorphism $\obs \to \pmas$ under $\ntns$. 
\end{proof}

Now, we verify that the axioms of a strict monoidal category hold.

\begin{proposition}
  The functor $\ntns : (\psh)^2 \to \psh$ makes $\psh$ into a strict monoidal category with monoidal unit $I$. 
\end{proposition}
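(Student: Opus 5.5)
Since the monoidal structure is to be strict, the plan is to take the associator and both unitors to be identities. Everything then reduces to three facts: the functor $\ntns$ is associative on the nose, on both objects and morphisms, and $I$ (with $\id_I$) is a two-sided unit on both objects and morphisms. Once these hold, the pentagon and triangle axioms are automatic, since every component in them is an identity morphism. Naturality of identity associators and unitors reduces to equalities such as $(f \ntns g) \ntns h = f \ntns (g \ntns h)$ and $\id_I \ntns f = f = f \ntns \id_I$.

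On objects, associativity holds because $\obs$ is a semigroup quotient. The unit law is the earlier proposition that $I$ is an identity in $\obs$. On morphisms, associativity of $\ntns$ holds in $\pma$ by construction: $\pma$ is a quotient of the free $\thr$-algebra, and $\ntns$ is associative in $\thr$.

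The only real work is showing that $\id_I \ntns f = f = f \ntns \id_I$ for every $f \in \pma$. I will prove this by structural induction over $\pmaf$, since every element of $\pma$ is represented by a term built from generators using $\ncmp$ and $\ntns$.
\begin{itemize}
\item For a generator $\phi \in \mor(\C)$, $\C \in \{\X,\Y\}$, use $\id_I = \id^{\C}_{I_\C}$ (legitimate since $I_\X = I_\Y$ via $F,G$ applied to $I_\A$). Identification (i) of $\pmas$ then gives $\id_I \ntns \phi = \id^{\C}_{I_\C} \otimes \phi = \phi$ by strictness of $\C$, and symmetrically on the right.
\item For a generator $\sigma_{ab}$, this is exactly identification \ref{unit}.
\item For $f \ntns g$, with $f,g$ satisfying the claim, associativity gives $\id_I \ntns (f \ntns g) = (\id_I \ntns f) \ntns g = f \ntns g$, and similarly on the right.
\item For $f \ncmp g$, either the type is $\bot$ or it is well-formed.
\end{itemize}

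The main obstacle is the composite case in the well-typed situation, because distributivity \ref{distributivity} is only available when the types are non-$\bot$. Suppose $\tp(f \ncmp g) \neq \bot$. Write $\id_I = \id_I \ncmp \id_I$, which holds by the category proof or by composition in $\X$, and note that $\tp(\id_I \ncmp \id_I) \neq \bot$. Then
\[
\id_I \ntns (f \ncmp g) = (\id_I \ncmp \id_I) \ntns (f \ncmp g) = (\id_I \ntns f) \ncmp (\id_I \ntns g) = f \ncmp g .
\]
If $\tp(f \ncmp g) = \bot$, distributivity cannot be applied. Here I would argue that this case is irrelevant for the proposition, since morphisms of $\psh$ are by definition elements of $\tp^{-1}(a,b)$ and never have type $\bot$.

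A subtlety remains: a well-typed element might have a representing term with ill-typed subterms. To handle this, I would run the induction on terms while asserting the claim only for terms whose subterms are all well-typed. I then need to check that every morphism of $\psh$ admits such a representative. This is not automatic from the congruence, since a well-typed element may have representing terms with ill-typed subterms, so this is the step requiring care. If it proves delicate, the fallback is to state the induction on $\pma$ directly: the set of $x \in \pma$ with $\id_I \ntns x = x = x \ntns \id_I$ is closed under $\ntns$, under well-typed $\ncmp$, and under the generators. One would then verify that the well-typed part of $\pma$ is generated from the generators by these operations.
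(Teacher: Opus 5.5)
Your proposal follows the paper's proof essentially step for step. Both take the coherence maps to be identities, inherit associativity from $\thr$ and $\obs$, and prove $\id_I \ntns f = f = f \ntns \id_I$ by structural induction. The base cases are the same (strictness of $\C$ for generators from $\X,\Y$, identification \ref{unit} for $\sigma_{ab}$), and so is the composite step: write $\id_I = \id_I \ncmp \id_I$, then apply distributivity.

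The step you leave open is not delicate, and your assertion that a well-typed element of $\pma$ may have a representing term with ill-typed subterms is false. The paper shows that $\tp$ descends to $\pma$ before it defines the category. So every representing term $t \in \pmaf$ of a morphism of $\psh$ satisfies $\tp(t) \neq \bot$. Because $\bot$ is absorptive for both operations of $\obt$, $\tp(t_1 \ntns t_2) \neq \bot$ or $\tp(t_1 \ncmp t_2) \neq \bot$ forces $\tp(t_1), \tp(t_2) \neq \bot$. Hence every subterm of every representative is well-typed. Your restricted induction therefore already covers all morphisms, and the ill-typed composite case you set aside never occurs inside a representative, so identification \ref{distributivity} always applies.

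The paper uses this fact only implicitly, when it runs its induction over $\mor(\psh)$. Your fallback is unnecessary.
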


\begin{proof}
  By definition, $\ntns$ is associative on both objects and morphisms. Additionally, $I$ is an identity for objects. We will use induction to show that for all $f \in \mor(\psh)$, we have $I \ntns f = f \ntns \id_I = f$. Let $\C \in \{\X , \Y\}$ and $f \in \mor(\C)$. Then 
  \[
    \id_I \ntns f = \id_{I_{\C}} \ntns f = \id_{I_{\C}} \otimes f = f = f \otimes \id_{I_{\C}} = f \ntns \id_{I_{\C}} = f \ntns \id_I .
  \]
  For any $a , b \in \obs$, we have $\id_I \ntns \sigma_{a b} = \sigma_{a b} \ntns \id_I = \sigma_{a b}$ by identification \ref{unit}. Next, let $f , g \in \mor(\psh)$ satisfy $\id_I \ntns f = f \ntns \id_I = f$ and $\id_I \ntns g = g \ntns \id_I = g$. Then 
  \[
    \id_I \ntns (f \ntns g) = (\id_I \ntns f) \ntns g = f \ntns g = f \ntns (g \ntns \id_I) = (f \ntns g) \ntns \id_I .
  \]
  Finally, let $a , b , c \in \ob(\psh)$, $f : a \to b$, and $g : b \to c$ satisfy $\id_I \ntns f = f \ntns \id_I = f$ and $\id_I \ntns g = g \ntns \id_I = g$. Then 
  \begin{multline*}
      \id_I \ntns (g \ncmp f) = (\id_I \ncmp \id_I) \ntns (g \ncmp f) 
      = (\id_I \ntns g) \ncmp (\id_I \ntns f) 
      = g \ncmp f \\
      = (g \ntns \id_I) \ncmp (f \ntns \id_I) 
      = (g \ncmp f) \ntns (\id_I \ncmp \id_I) 
      = (g \ncmp f) \ntns \id_I ,
  \end{multline*}
  which completes the inductive step. Thus, $\id_I$ is an identity with respect to $\ntns$ for morphisms. 
\end{proof}

Next, we show that the formal braiding expressions $\sigma_{a b}$ form a genuine symmetric braiding; this is straightforward given the identifications made when defining $\pma$.

\begin{proposition}
  The family $\sigma := {\sigma_{a b} }$ for ${a , b \in \obs}$ is a symmetric braiding on $\psh$. 
\end{proposition}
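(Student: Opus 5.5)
We check the axioms of a symmetric braiding on the strict monoidal category $\psh$ one at a time, each following directly from one of the identifications defining $\pma$. Since $\psh$ is strict, the associators and unitors are identities. So it suffices to prove four things: each $\sigma_{a b}$ is a morphism $a \ntns b \to b \ntns a$; the family is natural in both variables; the hexagon identities hold; and $\sigma_{b a} \ncmp \sigma_{a b} = \id_{a \ntns b}$. The last gives invertibility.

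First, typing. By definition $\tp(\sigma_{a b}) = (a \ntns b, b \ntns a)$, so $\sigma_{a b}$ lies in the correct hom-set. The composite $\id_{b \ntns a} \ncmp \sigma_{a b} = \sigma_{a b} = \sigma_{a b} \ncmp \id_{a \ntns b}$ was already verified in the proof that $\psh$ is a category.

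Next, naturality. Given $f : a \to c$ and $g : b \to d$ in $\psh$, identification \ref{naturality} says exactly that $(g \ntns f) \ncmp \sigma_{a b} = \sigma_{c d} \ncmp (f \ntns g)$. This is joint naturality in both variables at once; specializing $g$ or $f$ to an identity gives naturality in each variable separately. There is one subtlety: \ref{naturality} is stated for representatives in $\pmaf$ with the prescribed types. Since $\tp$ descends to $\pma$, any morphism of $\psh$ has a representative of the same type, so the identification applies.

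Then the axioms involving the second braiding. Symmetry is identification \ref{symmetry}. Because $\sigma_{b a}$ is then inverse to $\sigma_{a b}$, each $\sigma_{a b}$ is an isomorphism. Identification \ref{hexagon} gives the first hexagon in strict form:
\[
\sigma_{a (b \ntns c)} = (\id_b \ntns \sigma_{a c}) \ncmp (\sigma_{a b} \ntns \id_c).
\]
The second hexagon, for $\sigma_{(a \ntns b) c}$, is not listed among the identifications, so it has to be derived from the first hexagon together with symmetry. Write $\sigma_{(a \ntns b) c} = \sigma_{c (a \ntns b)}^{-1} = \sigma_{c(a \ntns b)}$-inverse. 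Expand $\sigma_{c (a \ntns b)}$ by \ref{hexagon}, invert the resulting composite, using functoriality of $\ntns$ (identification \ref{distributivity}) to invert each tensor factor, and then replace each $\sigma_{c x}^{-1}$ by $\sigma_{x c}$ via \ref{symmetry}. This yields
\[
\sigma_{(a \ntns b) c} = (\sigma_{a c} \ntns \id_b) \ncmp (\id_a \ntns \sigma_{b c}),
\]
which is the second hexagon.

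The main obstacle is bookkeeping rather than substance: every use of \ref{distributivity} requires its type side conditions, and each must be checked with $\tp$ as in the earlier propositions. Compatibility of $\sigma$ with the braidings of $\X$ and $\Y$ (identification \ref{braidings}) and with the unit (identification \ref{unit}) is not needed for the braiding axioms themselves; those identifications matter later, for the pushout's universal property.
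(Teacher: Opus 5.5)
Your proposal is correct and follows the paper's proof, which likewise reads off naturality, the hexagon identity, and symmetry directly from identifications \ref{naturality}, \ref{hexagon}, and \ref{symmetry}. Your derivation of the second hexagon from the first one plus symmetry is a step the paper leaves implicit, and you carry it out correctly.
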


\begin{proof}
  Naturality of $\sigma$ follows directly from identification \ref{naturality}. The hexagon identity follows directly from identification \ref{hexagon}, and the symmetry of $\sigma$ follows directly from identification \ref{symmetry}. 
\end{proof}

This completes the construction of the strict symmetric monoidal category $\psh$.

\subsection{Universal property of the pushout}

We turn our attention to verifying that $\psh$ is the pushout of the diagram \ref{eq:diagram-for-pushout}.

\subsubsection{Construction of the universal cocone} \label{sec:cocone}

First, we show that the operations of interpreting objects and morphisms in $\X$ and $\Y$ as objects and morphisms in $\psh$ form a cocone from the given diagram to $\psh$:

\begin{proposition} \label{prop:cocone}
  Let $\C \in \{\X , \Y\}$. The canonical maps $\ob(\C) \to \obs$ and $\mor(\C) \to \pma$ form the object and morphism parts of a strict symmetric monoidal functor $\iota_{\C} : \C \to \psh$. 
\end{proposition}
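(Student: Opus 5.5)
We define $\iota_\C$ as the composite of the inclusion $\ob(\C) \hookrightarrow \ob(\X) \sqcup \ob(\Y)$ with the quotient map to $\obs$ on objects. On morphisms it is the composite of the inclusion of generators $\mor(\C) \hookrightarrow \pmaf$ with the quotient maps $\pmaf \to \pmas \to \pma$. We then verify, in order, that this assignment is a functor, that it is strict monoidal, and that it preserves the braiding. Each check reduces to one of the generating identifications of $\obs$, $\pmas$ or $\pma$.

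First, the map is well-typed. For $\phi : x \to y$ in $\C$, the definition of $\tp$ on generators gives $\tp(\phi) = (x, y)$. Hence the class of $\phi$ lies in $\tp^{-1}(x, y)$, which is the hom-set $\psh(\iota_\C x, \iota_\C y)$. For functoriality, take composable $\phi, \psi$ in $\C$. Identification \ref{composition} gives $\iota_\C(\phi) \ncmp \iota_\C(\psi) = \phi \ncmp \psi \sim \phi \circ \psi = \iota_\C(\phi \circ \psi)$. Identities are preserved by the definition of $\id_{-}$, which sets $\id_x := \id^\C_x$ for $x \in \ob(\C)$.

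Next, strict monoidality. On objects, $\iota_\C(x \otimes y) = x \ntns y$ holds by the first identification defining $\obs$. On morphisms, $\iota_\C(\phi \otimes \psi) = \phi \ntns \psi$ holds by the first identification defining $\pmas$, and $\pma$ is a further quotient, so the equation persists there. The unit is preserved because $I_\C$ is exactly the element denoted $I$ in $\obs$, which serves as the unit by the Proposition stating that $I$ is an identity in $\obs$. Since both categories are strict, the structure maps $\mu$ and $\epsilon$ can be taken to be identities, and no coherence conditions remain beyond these equalities.

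Finally, the braiding. We need $\iota_\C(\sigma^\C_{xy}) = \sigma_{xy}$ in $\pma$ for $x, y \in \ob(\C)$, and this is precisely identification \ref{braidings}.

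None of these steps is hard. The only point needing care is that the relevant identifications were imposed at different stages ($\obs$, $\pmas$, $\pma$), so we must confirm that each survives the later quotients and that the hom-sets of $\psh$ are computed with the descended map $\tp$ on $\pma$. Both hold by the Proposition that $\tp$ descends to $\pma$. We do not need any faithfulness or injectivity of $\iota_\C$; the statement asks only that $\iota_\C$ be a strict symmetric monoidal functor.
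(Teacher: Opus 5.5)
Your proof is correct and follows the paper's argument essentially step for step. Well-typedness comes from $\tp$ on generators, functoriality from identification \ref{composition} and the definition of $\id_{-}$, strict monoidality from the first generating identifications of $\obs$ and $\pmas$ together with $I_\C = I$, and preservation of the braiding from identification \ref{braidings}. One minor point: the persistence of earlier identifications in $\pma$ is automatic because $\pma$ is a quotient of $\pmas$; the fact that $\tp$ descends to $\pma$ is needed only to identify the hom-sets of $\psh$.
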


\begin{proof}
  We will first show that $\iota_{\C}$ is functorial. Let $f : a \to b$ in $\C$. By definition, 
  \[
    \tp(\iota_{\C} (f)) = (a , b) = (\iota_{\C} (a), \iota_{\C} (b)).
  \]
  Additionally, for any $g : b \to c$ in $\C$, 
  \[
    \iota_{\C} (f \circ g) = f \circ g = f \ncmp g = \iota_{\C} (f) \ncmp \iota_{\C} (g).
  \]
  For any $a \in \ob(\C)$, we also have 
  \[
    \iota_{\C} (\id_a^{\C}) = \id_a^{\C} = \id_a ,
  \]
  so $\iota_{\C}$ is functorial. 
  
  To show that $\iota_\C$ is a strict monoidal functor, first note that for any $a , b \in \ob(\C)$, 
  \[
    \iota_{\C} (a \otimes b) = a \otimes b = a \ntns b = \iota_{\C} (a) \ntns \iota_{\C} (b).
  \]
  Next, observe that regardless of the value of $\C$, we have 
  \[
    \iota_{\C} (I_{\C}) = I_{\C} = I
  \]
  in $\psh$. For any $f , g \in \mor(\C)$, observe that 
  \[
    \iota_{\C} (f \otimes g) = f \otimes g = f \ntns g = \iota_{\C} (f) \ntns \iota_{\C} (g).
  \]
  Thus, $\iota_{\C}$ preserves the monoidal operation and unit strictly. Finally, for any $a , b \in \ob(\C)$, note that 
  ${\iota_{\C} (\sigma_{a b}^{\C}) = \sigma_{a b}^{\C} = \sigma_{a b}}$, so $\iota_{\C}$ preserves the braiding on $\C$. Thus, $\iota_{\C}$ is a symmetric monoidal functor. 
\end{proof}

Below, we denote by $\smc$ the category of strict symmetric monoidal categories with strong symmetric monoidal functors. It is straightforward to check, using the definitions of $\obs$ and $\pmas$ together with Proposition \ref{prop:cocone}, that the diagram \[
    \begin{tikzcd}
      \A && \Y \\
      \\
      \X && \psh
      \arrow["F", from=1-1, to=1-3]
      \arrow["G"', from=1-1, to=3-1]
      \arrow["{\iota_\Y}", from=1-3, to=3-3]
      \arrow["{\iota_\X}"', from=3-1, to=3-3]
    \end{tikzcd}
  \] commutes in $\smc$.

\subsubsection{Universality of the Cocone} Finally, we verify that the cocone to $\psh$ constructed in \ref{sec:cocone} is initial. Since the morphisms of $\psh$ are defined entirely in terms of semigroups and $\thr$-algebras, it will be much easier to define the necessary functor out of $\psh$ if its target can be described in terms of semigroups and $\thr$-algebras as well. The objects of any strict symmetric monoidal category already form a semigroup; we will show that the morphisms of any such category in fact form a $\thr$-algebra. Like before, we need to add an extra element $\bot$ to handle malformed expressions:

\begin{definition}
  For any strict symmetric monoidal category $\W$, let $\cta(\W)$ be the $\thr$-algebra on $\mor(\W) \sqcup \{\bot\}$ with operations defined by: 
  \begin{gather*}
    f \ntns g :=
    \begin{cases}
      f \otimes g \quad & \text{if } f, g \neq \bot \\
      \bot \quad & \text{if } \bot \in \{f, g\}
    \end{cases} \\
    f \ncmp g :=
    \begin{cases}
      f \circ g \quad & \text{if } f, g \neq \bot \text{ and } f \text{ and } g \text{ are composable} \\
      \bot \quad & \text{if } \bot \in \{f, g\} \text{ or } f \text{ and } g \text{ are not composable.}
    \end{cases}
  \end{gather*}
\end{definition}

To see that $\cta(\W)$ is a $\thr$-algebra, first observe that $\mor(\W)$ is closed under $\ntns$, and $\ntns$ coincides on $\mor(\W)$ with $\otimes$, which is strictly associative. Thus, $\ntns$ is associative on $\mor(\W)$. Outside of $\mor(\W)$, the only value of $\ntns$ is $\bot$, since the latter is absorptive, so $\ntns$ is associative.

  Next, let $f, g, h \in \cta(\W)$. If $\bot \in \{f, g, h\}$, then $(f \ncmp g) \ncmp h = \bot = f \ncmp (g \ncmp h)$. Otherwise, $f , g , h \in \mor(\W)$, so there exist $f_1 , f_2 , g_1 , g_2 , h_1 , h_2 \in \ob(\W)$ such that 
  \[
    f : f_1 \to f_2 , \qquad g : g_1 \to g_2 , \qquad h : h_1 \to h_2 .
  \]
  The four cases are shown in the table below: 

  \begin{center}
    \begin{tabular}{c|cc|cc}
      & \multicolumn{2}{|c}{$g_2 = f_1$} & \multicolumn{2}{|c}{$g_2 \neq f_1$} \\
      & $h_2 = g_1$ & $h_2 \neq g_1$ & $h_2 = g_1$ & $h_2 \neq g_1$ \\
      \hline
      $f \ncmp g$ & $f \circ g$ & $f \circ g$ & $\bot$ & $\bot$ \\
      $g \ncmp h$ & $g \circ h$ & $\bot$ & $g \circ h$ & $\bot$ \\
      $(f \ncmp g) \ncmp h$ & $(f \circ g) \circ h$ & $\bot$ & $\bot$ & $\bot$ \\
      $f \ncmp (g \ncmp h)$ & $f \circ (g \circ h)$ & $\bot$ & $\bot$ & $\bot$
    \end{tabular}
  \end{center}
  Observe that the last two rows coincide by the associativity of $\circ$, so $\ncmp$ is associative.

Finally, we have all the necessary machinery to prove that $\psh$ is the desired pushout:

\begin{theorem} \label{thm:pushout}
  $(\psh , \iota_{\X} , \iota_{\Y})$ is the pushout of the diagram $\X \xleftarrow{G} \A \xrightarrow{F} \Y$ in the category $\smc$.
\end{theorem}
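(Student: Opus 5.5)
Given a strict symmetric monoidal category $\W$ and strong symmetric monoidal functors $P : \X \to \W$, $Q : \Y \to \W$ with $P \circ G = Q \circ F$, I will construct a unique $H : \psh \to \W$ with $H \circ \iota_\X = P$ and $H \circ \iota_\Y = Q$. I will treat the case where $P$ and $Q$ are strict first; the general strong case should reduce to it, since the coherence data of $P$ and $Q$ must agree on the image of $\A$ and can be transported along.

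\textbf{Objects.} By the universal property of the free semigroup, $P$ and $Q$ induce a semigroup homomorphism from $\ob(\X) \sqcup \ob(\Y)$ into $(\ob(\W), \otimes)$. It respects the two generating identifications of $\obs$: relation (i) holds because $P$ and $Q$ preserve $\otimes$ strictly, and relation (ii) holds because $PG = QF$. This gives $H_0 : \obs \to \ob(\W)$.

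\textbf{Morphisms.} Since $\pmaf$ is free, there is a unique $\thr$-algebra homomorphism $\widetilde H : \pmaf \to \cta(\W)$ with
\begin{itemize}
  \item $f \mapsto P(f)$ or $Q(f)$ on generators from $\mor(\X) \sqcup \mor(\Y)$,
  \item $\sigma_{ab} \mapsto \sigma^\W_{H_0 a, H_0 b}$.
\end{itemize}
An induction on expressions shows that $\widetilde H$ is compatible with $\tp$: if $\tp(e) = (a,b)$ then $\widetilde H(e)$ is a genuine morphism $H_0 a \to H_0 b$, and if $\tp(e) = \bot$ nothing is claimed. The induction uses that composability in $\obs$ implies composability of the images, because $H_0$ is a function. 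I then check that $\widetilde H$ respects the congruences defining $\pmas$ and then $\pma$. The needed facts are:
\begin{itemize}
  \item functoriality and strict monoidality of $P$ and $Q$, and $PG = QF$, for the identifications of $\pmas$ and (i);
  \item the interchange law in $\W$ for (ii), using the side condition $\tp \neq \bot$ so that the images are genuinely composable;
  \item $P$ and $Q$ preserving braidings for (iii);
  \item the identity, unit, naturality, hexagon and symmetry laws of $\sigma^\W$ for (iv)–(viii).
\end{itemize}

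\textbf{The main obstacle.} Relations such as (ii) and (vi) are imposed only under type conditions. A congruence generated by conditional relations can still relate expressions whose images are $\bot$ to well-typed ones through compatibility closure. So I must argue that the kernel of $\widetilde H$, restricted appropriately, is a congruence containing the generators. Concretely, I will show that the relation "$\tp(e) = \tp(e')$ and $\widetilde H(e) = \widetilde H(e')$" is a congruence containing all generating pairs. Then $\widetilde H$ descends to $\pma$, and restricting to $\tp^{-1}(a,b)$ yields a functor $H$.

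\textbf{Remaining properties.} By construction $H$ preserves $\ntns$, $I$, identities and braidings, so it is strict symmetric monoidal, and $H \iota_\X = P$, $H \iota_\Y = Q$. For uniqueness, every morphism of $\psh$ is represented by an expression built from generators of the form $\iota_\C(f)$ and $\sigma_{ab}$. Any symmetric monoidal $H'$ satisfying the two equations is forced on the first kind of generator, and is forced on $\sigma_{ab}$ by braiding preservation. Since $H'$ preserves $\ncmp$ and $\ntns$, it agrees with $H$ everywhere.
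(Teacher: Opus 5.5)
Your core argument is the paper's own. You build a semigroup map $\obs\to\ob(\W)$ and a $\thr$-algebra map out of the free algebra into $\cta(\W)$ sending $\sigma_{ab}$ to $\sigma^{\W}_{h_1(a)h_1(b)}$. You then show by induction that well-typed expressions land on genuine morphisms with the predicted source and target, check each generating identification, and get uniqueness from the generators.

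Your ``main obstacle'' is not actually an obstacle. The kernel of any $\thr$-algebra homomorphism is automatically a congruence, so checking the generating pairs suffices, and that is exactly what the paper does. Intersecting with the kernel of $\tp$ is harmless but unnecessary.

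The step that fails is the claimed reduction of the strong case to the strict one. For strong functors, uniqueness is false, and transporting coherence data is precisely what breaks it. Here is a counterexample:
\begin{itemize}
\item Take $\A$ terminal, $\X$ and $\Y$ free strict symmetric monoidal on single objects $x$ and $y$, and $\W=\psh$ with $\kappa_\C=\iota_\C$.
\item Let $s_{x\ntns y}:=\sigma_{xy}\colon x\ntns y\to y\ntns x$, and let $s_w:=\id_w$ for every other $w\in\obs$.
\item Transport the identity functor along $s$. Let $H'(w)$ be the codomain of $s_w$, and set $H'(f):=s_b\ncmp f\ncmp s_a^{-1}$ for $f\colon a\to b$, $\mu^{H'}_{v,w}:=s_{v\ntns w}\ncmp(s_v^{-1}\ntns s_w^{-1})$, and $\eta^{H'}:=\id_I$.
\end{itemize}
This $H'$ is a strong symmetric monoidal functor. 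Since $s$ is the identity on $\ob(\X)\cup\ob(\Y)$, and each of these is closed under $\ntns$, we get $H'\circ\iota_\C=\iota_\C$ including structure maps. Yet $H'(x\ntns y)=y\ntns x\neq x\ntns y$ in $\obs$, so $H'\neq\id$.

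What your argument proves is the universal property among strict symmetric monoidal functors. The paper's proof proves the same thing, since it also silently uses strictness:
\begin{itemize}
\item it uses $\kappa_\C(a)\otimes\kappa_\C(b)=\kappa_\C(a\otimes b)$;
\item it invokes ``$\kappa_\C$ is symmetric and strict'' for identification (iii);
\item its uniqueness step needs $L$ to preserve $\ntns$ and $\sigma$ on the nose.
\end{itemize}
You should state the result for strict functors and drop the reduction.
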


\begin{proof}
  Given a commutative diagram of the following form in $\smc$: 
  \[
    \begin{tikzcd}
      \A && \Y \\
      \\
      \X && \W
      \arrow["F", from=1-1, to=1-3]
      \arrow["G"', from=1-1, to=3-1]
      \arrow["{\kappa_\Y}", from=1-3, to=3-3]
      \arrow["{\kappa_\X}"', from=3-1, to=3-3]
    \end{tikzcd}
  \]
  let $h_1 : (\obs , \ntns) \to (\ob(\W), \otimes)$ be the semigroup homomorphism defined by $h_1 (a) := \kappa_{\C} (a)$ for every $\C \in \{\X , \Y\}$ and $a \in \ob(\C)$, and let $h_2 : \pmas \to \cta(\W)$ be the $\thr$-algebra homomorphism defined by 
  \begin{itemize}
    \item $h_2 (f) := \kappa_{\C} (f)$ for every $\C \in \{\X , \Y\}$ and $f \in \mor(\C)$
    \item $h_2 (\sigma_{a b}) := \sigma_{h_1 (a) h_1 (b)}^{\W}$ for each $a , b \in \obs$.
  \end{itemize}
  We will first show that $h_1$ and $h_2$ are well-defined. Let 
  \[
    \C \in \{\X , \Y\}
  \]
  and $a , b \in \ob(C)$. Then 
  \[
    h_1 (a \ntns b) = h_1 (a) \ntns h_1 (b) = \kappa_{\C} (a) \otimes \kappa_{\C} (b) = \kappa_{\C} (a \otimes b) = h_1 (a \otimes b).
  \]
  Additionally, for any $a \in \ob(\A)$, 
  \[
    h_1 (G(a)) = \kappa_{\X} (G(a)) = \kappa_{\Y} (F(a)) = h_1 (F(a)).
  \]
  Thus, $h_1$ is well-defined. Let $\C \in \{\X , \Y\}$ and $f , g \in \mor(\C)$. Then 
  \[
    h_2 (f \ntns g) = h_2 (f) \ntns h_2 (g) = \kappa_{\C} (f) \ntns \kappa_{\C} (g) = \kappa_{\C} (f) \otimes \kappa_{\C} (g) = \kappa_{\C} (f \otimes g) = h_2 (f \otimes g).
  \]
  For any $f \in \mor(\A)$, 
  \[
    h_2 (G(f)) = \kappa_{\X} (G(f)) = \kappa_{\Y} (F(f)) = h_2 (F(f)).
  \]
  Therefore, $h_2$ is well-defined.

  Now, we will use induction to show that for any $a \in \obs$, we have $h_2 (\id_a) = \id_{h_1 (a)}^{\W}$. For any $\C \in \{\X , \Y\}$ and any $a \in \ob(\C)$, we 
  \[
    h_2 (\id_a) = h_2 (\id_a^{\C}) = \kappa_{\C} (\id_a^{\C}) = \id_{\kappa_{\C} (a)}^{\W} = \id_{h_1 (a)}^{\W} .
  \]
  For any $a , b \in \obs$ such that $h_2 (\id_a) = \id_{h_1 (a)}^{\W}$ and $h_2 (\id_b) = \id_{h_1 (b)}^{\W}$, 
  \[
    h_2 (\id_{a \ntns b}) = h_2 (\id_a) \ntns h_2 (\id_b) = \id_{h_1 (a)}^{\W} \ntns \id_{h_1 (b)}^{\W} = \id_{h_1 (a)}^{\W} \otimes \id_{h_1 (b)}^{\W} = \id_{h_1 (a \ntns b)}^{\W}.
  \]
  This concludes the inductive step.

  Now, we will use induction to show that for any $f \in \pmas$ such that $\tp(f) = (a , b)$ for some $a , b \in \obs$, the element $h_2 (f) \in \cta(\W)$ is a morphism $h_1 (a) \to h_1 (b)$ in $\W$. Note that as a consequence, we then have that for any $f , g \in \pmas$, we have $h_2 (f \ntns g) = h_2 (f) \otimes h_2 (g)$ if $\tp(f), \tp(g) \neq \bot$ and $h_2 (f \ncmp g) = h_2 (f) \circ h_2 (g)$ if $\tp(f \ncmp g) \neq \bot$.

  First, for any $\C \in \{\X , \Y\}$ and $f : a \to b$ in $\C$, we have $h_2 (f) = \kappa_{\C} (f) : \kappa_{\C} (a) \to \kappa_{\C} (b)$ in $\W$ and $\tp(f) = (a , b)$ in $\pma$. Since $a , b \in \ob(\C)$, we have $h_1 (a) = \kappa_{\C} (a)$ and $h_1 (b) = \kappa_{\C} (b)$, so $h_2 (f): h_1 (a) \to h_1 (b)$. Next, for any $a , b \in \obs$, observe that 
  \[
    h_2 (\sigma_{a b}) = \sigma_{h_1 (a) h_1 (b)}^{\W} : h_1 (a) \otimes h_1 (b) \to h_1 (b) \otimes h_1 (a)
  \]
  in $\W$. Since $h_1$ is a semigroup homomorphism, this means that $h_2 (\sigma_{a b})$ is a morphism in $\W$ with source $h_1 (a \ntns b)$ and target $h_1 (b \ntns a)$. Recall that $\tp(\sigma_{a b}) = (a \ntns b , b \ntns a)$, so $\sigma_{a b}$ has the desired property.

  Now, let $f , g \in \pmas$ have the desired property and satisfy $\tp(f \ntns g) \neq \bot$. Then since $\bot$ is absorptive, there exist $a , b , c , d \in \obs$ such that $\tp(f) = (a , b)$ and $\tp(g) = (c , d)$, meaning that $h_2 (f): h_1 (a) \to h_1 (b)$ and $h_2 (g) : h_1 (c) \to h_1 (d)$ in $\W$. Thus, 
  \[
    h_2 (f \ntns g) = h_2 (f) \ntns h_2 (g) = h_2 (f) \otimes h_2 (g) : h_1 (a) \otimes h_1 (c) \to h_1 (b) \otimes h_1 (d).
  \]
  Since $h_1$ is a semigroup homomorphism, this means that $h_2 (f \ntns g): h_1 (a \ntns c) \to h_1 (b \ntns d)$ in $\W$. Observe that $\tp(f \ntns g) = (a \ntns c , b \ntns d)$, so $f \ntns g$ has the desired property.

  Letting $f , g \in \pmas$ instead satisfy $\tp(f \ncmp g) \neq \bot$, we may conclude that there exist $a , b , c \in \obs$ such that $\tp(f) = (b , c)$ and $\tp(g) = (a , b)$, so $h_2 (f): h_1 (b) \to h_1 (c)$ and $h_2 (g) : h_1 (a) \to h_1 (b)$ in $\W$. Thus, 
  \[
    h_2 (f \ncmp g) = h_2 (f) \ncmp h_2 (g) = h_2 (f) \circ h_2 (g) : h_1 (a) \to h_1 (c).
  \]
  Since $\tp(f \ncmp g) = (a , c)$, this means that $f$ has the desired property, completing the induction. 

  Next, we will show that $h_2$ descends to the quotient $\pma$. We begin by verifying that $h_2$ respects identification \ref{composition}; for any $\C \in \{\X , \Y\}$ and composable $f , g \in \mor(\C)$, we have $\tp(f \ncmp g) \neq \bot$, so 
  \[
    h_2 (f \ncmp g) = h_2 (f) \circ h_2 (g) = \kappa_{\C} (f) \circ \kappa_{\C} (g) = \kappa_{\C} (f \circ g) = h_2 (f \circ g).
  \]
  We will now verify that $h_2$ respects identification \ref{distributivity}; for any $f , g , \ell , m \in \pmaf$ such that $\tp(f \ncmp g), \tp(\ell \ncmp m) \neq \bot$, we have $\tp((f \ntns \ell) \ncmp (g \ntns m)) \neq \bot$. Therefore, 
  \[
    \begin{split}
      h_2 ((f \ncmp g) \ntns (\ell \ncmp m)) & = (h_2 (f) \circ h_2 (g)) \otimes (h_2 (\ell) \circ h_2 (m)) \\
      & = ( h_2 (f) \otimes h_2 (\ell)) \circ (h_2 (g) \otimes h_2 (m)) \\
      & = h_2 ((f \ntns \ell) \ncmp (g \ntns m)).
    \end{split}
  \]
  Next, we verify that $h_2$ respects identification \ref{braidings}; for any $\C \in \{\X , \Y\}$ and $a , b \in \ob(\C)$, 
  \[
    h_2 (\sigma_{a b}) = \sigma_{h_1 (a) h_1 (b)}^{\W} = \sigma_{\kappa_{\C} (a) \kappa_{\C} (b)}^{\W} = \kappa_{\C} (\sigma_{a b})
  \]
  since $\kappa_{\C}$ is symmetric and strict. Additionally, $\tp(\sigma_{a b} \ncmp \id_{a \ntns b}) \neq \bot$, so 
  \[
    h_2 (\sigma_{a b} \ncmp \id_{a \ntns b}) = \sigma_{h_1 (a) h_1 (b)}^{\W} \circ \id_{h_1 (a \ntns b)}^{\W} = \sigma_{h_1 (a) h_1 (b)}^{\W} = h_2 (\sigma_{a b}).
  \]
  Thus, $h_2$ respects identification \ref{identity}. Observe as well that 
  \[
    h_2 (\id_I) = h_2 (\id_{I_{\X}}) = \kappa_{\X} (\id_{I_{\X}}) = \id_{I_{\W}}^{\W} ,
  \]
  so for any such $a$ and $b$, 
  \[
    h_2 (\id_I \ntns \sigma_{a , b}) = h_2 (\id_I) \otimes h_2 (\sigma_{a , b}) = \id_{I_{\W}}^{\W} \otimes \sigma_{a , b}^{\W} = \sigma_{a , b}^{\W} = h_2 (\sigma_{a , b})
  \]
  and 
  \[
    h_2 (\sigma_{a , b} \ntns \id_I) = h_2 (\sigma_{a , b}) \otimes h_2 (\id_I) = \sigma_{a , b}^{\W} \otimes \id_{I_{\W}}^{\W} = \sigma_{a , b}^{\W} = h_2 (\sigma_{a , b}).
  \]
  This shows that $h_2$ respects identification \ref{unit}. Now, we turn to identity \ref{naturality}; for any $a , b , c , d \in \obs$ and any $f , g \in \pmaf$ such that $\tp(f) = (a , c)$ and $\tp(g) = (b , d)$, we have 
  \[
    \tp((g \ntns f) \ncmp \sigma_{a b}), \tp(\sigma_{c d} \ncmp (f \ntns g)) \neq \bot .
  \]
  Additionally, $h_2 (f): h_1 (a) \to h_1 (c)$ and $h_2 (g): h_1 (b) \to h_1 (d)$ in $\W$, so by the naturality of $\sigma^{\W}$,
  \[
    h_2 ((g \ntns f) \ncmp \sigma_{a b}) = (h_2 (g) \otimes h_2 (f)) \circ \sigma_{h_1 (a) h_1 (b)}^{\W} = \sigma_{h_1 (c) h_1 (d)}^{\W} \circ (h_2 (f) \ntns h_2 (g)) = h_2 (\sigma_{c d} \ncmp (f \ntns g)). 
  \]
  We will show that $h_2$ also respects identification \ref{hexagon}; for any $a , b , c \in \obs$, we have $\tp((\id_b \ntns \sigma_{a c}) \ncmp (\sigma_{a b} \ntns \id_c)) \neq \bot$, so
  \[
    \begin{split}
      h_2 (\sigma_{a (b \ntns c)}) & = \sigma_{h_1 (a) (h_1 (b) \otimes h_1 (c))}^{\W} \\
      & = (\id_{h_1 (b)}^{\W} \otimes \sigma_{h_1 (a) h_1 (c)}^{\W}) \circ (\sigma_{h_1 (a) h_1 (b)}^{\W} \otimes \id_{h_1 (c)}^{\W}) \\
      & = h_2 ((\id_b \ntns \sigma_{a c}) \ncmp (\sigma_{a b} \ntns \id_c))
    \end{split}
  \]
  Finally, we will show that $h_2$ respects identification \ref{symmetry}; for any $a , b \in \obs$, we have $\tp(\sigma_{b a} \ncmp \sigma_{a b}) \neq \bot$, so
  \[
    h_2 (\sigma_{b a} \ncmp \sigma_{a b}) = \sigma_{h_1 (b) h_1 (a)}^{\W} \circ \sigma_{h_1 (a) h_1 (b)}^{\W} = \id_{h_1 (a \ntns b)}^{\W} = h_2 (\id_{a \ntns b}).
  \]
  Thus, $h_2$ induces a well-defined $\thr$-algebra homomorphism $\pma \to \cta(\W)$.

  Now, since $h_2$ takes morphisms in $\psh$ to morphisms in $\W$ with the expected sources and targets given by $h_1$, takes $\ntns$ to $\otimes$ and $\ncmp$ to $\circ$ when appropriate, and preserves identity morphisms, we may define a functor $H : \psh \to \W$ whose action on objects is given by $h_1$ and whose action on morphisms is given by $h_2$. We will show that $H$ is a strict monoidal functor. Since $h_1$ and $h_2$ are strictly associative, $H$ is strictly associative on both objects and morphisms. To see that $H$ is unital, let $a \in \obs$ and observe that 
  \[
    H(I) = H(I_{\X}) = \kappa_{\X} (I_{\X}) = I_{\W} .
  \]
  Additionally, $H$ preserves the braiding by the definition of $h_2$, so it is a symmetric monoidal functor. For $\C \in \{\X , \Y\}$, the fact that $H \circ \iota_{\C} = \kappa_{\C}$ follows directly from the definition of $H$.

  Finally, we will show that $H$ is unique. Let $L : \psh \to \W$ satisfy $L \circ \iota_{\C} = \kappa_{\C}$ for each $\C \in \{\X , \Y\}$. Then for any $a \in \ob(\C)$, 
  \[
    H(a) = H(\iota_{\C} (a)) = \kappa_{\C} (a) = L(\iota_{\C} (a)) = L(a).
  \]
  Thus, $H$ and $L$ agree on objects. For any $\C \in \{\X , \Y\}$ and $f \in \mor(\C)$, 
  \[
    H(f) = H(\iota_{\C} (f)) = \kappa_{\C} (f) = L(\iota_{\C} (f)) = L(f).
  \]
  Additionally, for any $a , b \in \obs$, 
  \[
    H(\sigma_{a b}) = \sigma_{H(a) H(b)}^{\W} = L(\sigma_{a b})
  \]
  since $H$ and $L$ are both symmetric. Thus, $H = L$. 
\end{proof}

\section{Preservation of central idempotents} \label{sec:central-idempotents}

To conclude, we would like to show that the central idempotent lattices $\zi(\X)$ and $\zi(\Y)$ are sublattices of $\zi(\psh)$. Indeed, we will show that any strong braided monoidal functor $K : \C \to \D$ preserves central idempotents in a sense defined below in \ref{def:stronglypreserves}, so that $\zi(K\C) \subset \zi(\D)$. In particular, this inclusion will hold for the canonical embeddings $\X \to \psh$ and $\Y \to \psh$.

We first verify that the property of being a central idempotent is preserved under precomposition by isomorphisms.

\begin{proposition}\label{prop:equivalent-central-idempotents}
  Let $\C$ be a monoidal category, $u: U \to I$ be a central idempotent in $\C$, and $f: V \to U$ be an isomorphism. Then $v := u \circ f$ is a central idempotent equivalent to $u$.
\end{proposition}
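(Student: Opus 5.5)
The plan has three parts: transport the half-braiding of $U$ along $f$, check the two defining conditions of a central idempotent for $v$, and exhibit an isomorphism $v \cong u$ in $\Z(\C)/I$.

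\emph{Half-braiding on $V$.} Let $\sigma$ be the half-braiding on $U$ for which $u:(U,\sigma)\to(I,\rho^{-1}\circ\lambda)$ lies in $\Z(\C)$. Define a half-braiding $\tau$ on $V$ by conjugation:
\[
\tau_A := (A\otimes f^{-1})\circ\sigma_A\circ(f\otimes A).
\]
Naturality of $\tau$ in $A$ and invertibility of each $\tau_A$ are immediate from the corresponding properties of $\sigma$. For the hexagon-type axiom, substitute the formula for $\tau$ into the diagram. The inner factors $f^{-1}$ and $f$ that appear between $\tau_A\otimes B$ and $A\otimes\tau_B$ cancel after using naturality of $\alpha$. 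What remains is the axiom for $\sigma$, conjugated by $f\otimes(A\otimes B)$ and $(A\otimes B)\otimes f^{-1}$, again moved through $\alpha$ by naturality.

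\emph{Morphisms in the center.} By construction, $f:(V,\tau)\to(U,\sigma)$ respects half-braidings; this is just a rearrangement of the definition of $\tau$. Its inverse $f^{-1}$ then respects them as well, so $f$ is an isomorphism in $\Z(\C)$. Consequently $v=u\circ f$ is a morphism $(V,\tau)\to(I,\rho^{-1}\circ\lambda)$ in $\Z(\C)$, being a composite of morphisms there. Moreover $f$ is an isomorphism $v\to u$ in $\Z(\C)/I$, since $u\circ f=v$ holds by definition. This gives the equivalence of $v$ and $u$.

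\emph{Idempotence condition.} It remains to show that the square for $v$ commutes and that its diagonal $V\otimes V\to V$ is invertible. Write $V\otimes v=(V\otimes u)\circ(V\otimes f)$ and $v\otimes V=(u\otimes V)\circ(f\otimes V)$, and use naturality of $\rho$ and $\lambda$ in the forms $\rho_V=f^{-1}\circ\rho_U\circ(f\otimes I)$ and $\lambda_V=f^{-1}\circ\lambda_U\circ(I\otimes f)$. Then
\[
\rho_V\circ(V\otimes v)=f^{-1}\circ\rho_U\circ(U\otimes u)\circ(f\otimes f),
\]
and similarly
\[
\lambda_V\circ(v\otimes V)=f^{-1}\circ\lambda_U\circ(u\otimes U)\circ(f\otimes f).
\]
The two sides agree because the square commutes for $u$, and the common value is invertible as a composite of invertible maps.

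The only step with real bookkeeping is verifying the half-braiding axiom for $\tau$; everything else is a direct naturality computation. I expect that step to go through cleanly by drawing the conjugated diagram and cancelling the adjacent $f^{-1}\circ f$ pairs.
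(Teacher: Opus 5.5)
Your proposal is correct and follows the paper's proof essentially step for step. You conjugate the half-braiding of $U$ by $f$, note that $f$ and $f^{-1}$ lie in $\Z(\C)$ so that $v$ is a morphism there equivalent to $u$, and use naturality of $\lambda$ and $\rho$ to rewrite both legs of the square for $v$ as $f^{-1}\circ(\text{leg for }u)\circ(f\otimes f)$, which gives both commutativity and invertibility; the paper does the same via diagrams.
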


\begin{proof}
  Set $\sigma_{V A} := (A \otimes f^{-1}) \circ \sigma_{U A} \circ (f \otimes A)$ for each $A \in \ob(\C)$. We will show that $\sigma_V$ is a half-braiding. For any $g: A \to B$ in $\C$, the following diagram commutes, so $\sigma_V$ is natural.
  \[
    \begin{tikzcd}
  	{V \otimes A} &&&&&& {V \otimes B} \\
  	\\
  	&& {U \otimes A} && {U \otimes B} \\
  	\\
  	&& {A \otimes U} && {B \otimes U} \\
  	\\
  	{A \otimes V} &&&&&& {B \otimes V}
  	\arrow["{g \otimes A}", from=1-1, to=1-7]
  	\arrow["{f \otimes A}"{description}, from=1-1, to=3-3]
  	\arrow["{\sigma_{V A}}", from=1-1, to=7-1]
  	\arrow["{f \otimes B}"{description}, from=1-7, to=3-5]
  	\arrow["{\sigma_{V B}}", from=1-7, to=7-7]
  	\arrow["{U \otimes g}", from=3-3, to=3-5]
  	\arrow["{\sigma_{U A}}", from=3-3, to=5-3]
  	\arrow["{\sigma_{U B}}", from=3-5, to=5-5]
  	\arrow["{g \otimes U}", from=5-3, to=5-5]
  	\arrow["{A \otimes f^{-1}}"{description}, from=5-3, to=7-1]
  	\arrow["{B \otimes f^{-1}}"{description}, from=5-5, to=7-7]
  	\arrow["{g \otimes V}", from=7-1, to=7-7]
    \end{tikzcd}
  \]
  Additionally, for any $A, B \in \ob(\C)$, the following diagram commutes, \\
  \adjustbox{scale=.9}{
    \begin{tikzcd}
      {V \otimes (A \otimes B)} &&&& {(V \otimes A) \otimes B} &&&& {(A \otimes V) \otimes B} \\
      \\
      && {U \otimes (A \otimes B)} && {(U \otimes A) \otimes B} && {(A \otimes U) \otimes B} \\
      && {(A \otimes B) \otimes U} && {A \otimes (B \otimes U)} && {A \otimes (U \otimes B)} \\
      \\
      {(A \otimes B) \otimes V} &&&& {A \otimes (B \otimes V)} &&&& {A \otimes (V \otimes B)}
      \arrow["{{\alpha^{-1}_{V A B}}}", from=1-1, to=1-5]
      \arrow["{{f \otimes (A \otimes B)}}"{description}, from=1-1, to=3-3]
      \arrow["{{\sigma_{V (A \otimes B)}}}", from=1-1, to=6-1]
      \arrow["{{\sigma_{V A} \otimes B}}", from=1-5, to=1-9]
      \arrow["{{(f \otimes A) \otimes B}}", from=1-5, to=3-5]
      \arrow["{{\alpha_{A V B}}}", from=1-9, to=6-9]
      \arrow["{{\alpha^{-1}_{U A B}}}", from=3-3, to=3-5]
      \arrow["{{\sigma_{U (A \otimes B)}}}", from=3-3, to=4-3]
      \arrow["{{\sigma_{U A} \otimes B}}", from=3-5, to=3-7]
      \arrow["{{(A \otimes f^{-1}) \otimes B}}"{description}, from=3-7, to=1-9]
      \arrow["{{\alpha_{A U B}}}", from=3-7, to=4-7]
      \arrow["{{(A \otimes B) \otimes f^{-1}}}"{description}, from=4-3, to=6-1]
      \arrow["{{\alpha^{-1}_{A B U}}}"', from=4-5, to=4-3]
      \arrow["{{A \otimes (B \otimes f^{-1})}}", from=4-5, to=6-5]
      \arrow["{{A \otimes \sigma_{U B}}}"', from=4-7, to=4-5]
      \arrow["{{\alpha^{-1}_{A B V}}}"', from=6-5, to=6-1]
      \arrow["{{A \otimes (f \otimes B)}}"{description}, from=6-9, to=4-7]
      \arrow["{{A \otimes \sigma_{V B}}}"', from=6-9, to=6-5]
    \end{tikzcd}
  } \\
  so $\sigma_V$ is a half-braiding.

  Next, we observe that $f$ and $f^{-1}$ are morphisms in $Z(\C)$; for any $A \in \ob(\C)$,
  \[
    (A \otimes f) \circ \sigma_{V A}
    = (A \otimes f) \circ (A \otimes f^{-1}) \circ \sigma_{U A} \circ (f \otimes A)
    = \sigma_{U A} \circ (f \otimes A) \\
  \]
  and
  \[
    (A \otimes f^{-1}) \circ \sigma_{U A}
    = (A \otimes f^{-1}) \circ \sigma_{U A} \circ (A \otimes f) \circ (f^{-1} \otimes A)
    = \sigma_{V A} \circ (f^{-1} \otimes A).
  \]
  Thus, $v$ is also a morphism in $Z(\C)$.

  Finally, observe that the following diagram commutes:
  \[
    \begin{tikzcd}
      {V \otimes V} &&&& {V \otimes I} \\
      \\
      && {U \otimes V} && {U \otimes I} \\
      \\
      {I \otimes V} && {I \otimes U} && {U \otimes U}
      \arrow["{{V \otimes v}}", from=1-1, to=1-5]
      \arrow["{{f \otimes V}}"{description}, from=1-1, to=3-3]
      \arrow["{{v \otimes V}}", from=1-1, to=5-1]
      \arrow["{{f \otimes I}}", from=1-5, to=3-5]
      \arrow["{{U \otimes v}}", from=3-3, to=3-5]
      \arrow["{{u \otimes V}}"{description}, from=3-3, to=5-1]
      \arrow["{{U \otimes f}}"{description}, from=3-3, to=5-5]
      \arrow["{{I \otimes f}}", from=5-1, to=5-3]
      \arrow["{{U \otimes u}}"', from=5-5, to=3-5]
      \arrow["{{u \otimes U}}"', from=5-5, to=5-3]
    \end{tikzcd}
  \]
  This means that the topmost and leftmost trapezoids in the following diagram commute, so the diagram as a whole commutes:
  \[
    \begin{tikzcd}
      {V \otimes V} &&&&&& {V \otimes I} \\
      && {U \otimes U} && {U \otimes I} \\
      \\
      && {I \otimes U} && U \\
      {I \otimes V} &&&&&& V \\
      &&&& {}
      \arrow["{{V \otimes v}}", from=1-1, to=1-7]
      \arrow["{{f \otimes f}}"{description}, from=1-1, to=2-3]
      \arrow["{{v \otimes V}}", from=1-1, to=5-1]
      \arrow["{{f \otimes I}}"{description}, from=1-7, to=2-5]
      \arrow["{{\rho_V}}", from=1-7, to=5-7]
      \arrow["{{U \otimes u}}", from=2-3, to=2-5]
      \arrow["{{u \otimes U}}", from=2-3, to=4-3]
      \arrow["{{\rho_U}}", from=2-5, to=4-5]
      \arrow["{{\lambda_U}}", from=4-3, to=4-5]
      \arrow["{{f^{-1}}}"{description}, from=4-5, to=5-7]
      \arrow["{{I \otimes f}}"{description}, from=5-1, to=4-3]
      \arrow["{{\lambda_V}}", from=5-1, to=5-7]
    \end{tikzcd}
  \]
  Since $\lambda_U \circ (u \otimes U)$ is invertible and both $f^{-1}$ and $f \otimes f$ are invertible, the morphism $$\lambda_V \circ (v \otimes V) = f^{-1} \circ \lambda_U (u \otimes U) \circ (f \otimes f)$$ is invertible. This makes $v$ a central idempotent in $\C$. Since $f$ is in $\C$, the central idempotents $u$ and $v$ are equivalent.
\end{proof}

\begin{proposition} \label{prop:unique-halfbr}
    Let $\C$ be a monoidal category, $U \in \ob(\C)$, $\sigma$ and $\tau$ be half-braidings for $U$, and $u$ be a central idempotent $(U, \sigma) \to I$  and $(U, \tau) \to I$. Then $\sigma = \tau$.
\end{proposition}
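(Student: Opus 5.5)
The idea is to show that a half-braiding on $U$ is determined by the central idempotent $u$. Both $\sigma$ and $\tau$ make $u$ a morphism in $\Z(\C)$ to $(I, \rho^{-1}\circ\lambda)$. We then exploit the invertibility of $\lambda_U \circ (u \otimes U) = \rho_U \circ (U \otimes u)$ to cancel.

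First, fix $A \in \ob(\C)$ and write the condition that $u$ respects half-braidings for $\sigma$:
\[
\rho_A^{-1}\circ\lambda_A \circ (u \otimes A) = (A \otimes u)\circ \sigma_A ,
\]
and the same identity with $\tau_A$ in place of $\sigma_A$. Hence
\[
(A \otimes u)\circ\sigma_A = (A \otimes u)\circ \tau_A \colon U \otimes A \to A \otimes I .
\]
This alone does not let us cancel $A \otimes u$, so we tensor with an extra copy of $U$ and use the idempotent property.

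Second, I would use naturality of $\sigma$ in its argument together with the half-braiding axiom, applied with the objects $U$ and $A$. The axiom expresses $\sigma_{U\otimes A}$ and $\sigma_{A\otimes U}$ via $\sigma_U$ and $\sigma_A$. Naturality of $\sigma$ along $u\colon U \to I$ gives
\[
(u \otimes U)\circ \sigma_U = (U\otimes u)\text{-type relation},
\]
namely $\sigma_I \circ (U \otimes u) = (u \otimes U)\circ\sigma_U$. Since $\sigma_I$ is the canonical unitor composite (by the standard consequence of the hexagon with $A=B=I$), this shows $\sigma_U$ is forced to be the map for which $\lambda_U(u\otimes U)\sigma_U = \rho_U(U\otimes u)$. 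Because $\lambda_U(u\otimes U) = \rho_U (U \otimes u)$ is invertible, $\sigma_U$ equals the identity (up to unitors), and likewise $\tau_U$. So $\sigma_U = \tau_U$.

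Third, for general $A$, consider $\sigma_A \otimes U$ after precomposing with $U \otimes A \otimes U$ rearranged. More concretely, I would compute $(A\otimes m)\circ(\sigma_A\otimes U)$, where $m = \lambda_U(u\otimes U)\colon U\otimes U \to U$ is the invertible multiplication. The idea is to reduce $\sigma_A$ to an expression involving only $(A\otimes u)\circ\sigma_A$, which is equal for $\sigma$ and $\tau$, together with $\sigma_U$, which we already know agree. The route is: by naturality of $\sigma$ in $A$-slot along $m$, and the half-braiding axiom, rewrite $\sigma_{A}\circ(m\otimes A)$ as $(A\otimes m)\circ$ (a composite of $U\otimes\sigma_A$, $\sigma_A\otimes U$, and associators). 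Then replace one factor $U$ by $I$ via $u$ using the first step, and replace the remaining $\sigma$-dependence by the equal data. Since $m\otimes A$ and $A\otimes m$ are invertible, cancelling them yields $\sigma_A = \tau_A$.

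The main obstacle is this third step: arranging the bookkeeping so that after factoring through $U \otimes U \cong U$ every occurrence of $\sigma_A$ appears already postcomposed with $A\otimes u$. I would first try the cleanest version. Write $\sigma_A = (A\otimes m)\circ(A\otimes m^{-1})\circ\sigma_A$, then use naturality of $\sigma$ along $m^{-1}$, giving $\sigma_A\circ(m\otimes A)$-type terms. Next, expand with the half-braiding axiom using $U\otimes U$ as the source, so that the idempotent splits into one copy of $U$ collapsed by $u$ and another copy carrying the braiding. If that fails, the fallback is to work in a strictified or graphical calculus, which suffices by coherence.
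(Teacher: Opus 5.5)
Your Steps 1 and 2 are correct. Step 2 is in fact a tidy direct proof of $\sigma_U = \id_{U\otimes U}$, which the paper simply cites from \cite[Lemma 2.5]{sheaf-rep}. The gap is in Step 3.

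The identity $\sigma_A\circ(m\otimes A) = (A\otimes m)\circ(\sigma\otimes\sigma)_A$ does hold. The reason is that $m = \lambda_U\circ(u\otimes U)$ is a morphism in $\Z(\C)$ out of $U\otimes U$ with its tensor-product half-braiding; it is not naturality of $\sigma$ in its argument. The more serious problem comes next. Once $A\otimes m$ has collapsed the outer copy (triangle axiom plus Step 1), what remains is
\[
\sigma_A = K\circ(U\otimes\sigma_A)\circ J, \qquad K = \lambda_{A\otimes U}\circ\bigl(u\otimes(A\otimes U)\bigr), \quad J = \bigl(\lambda_{U\otimes A}\circ(u\otimes(U\otimes A))\bigr)^{-1}.
\]
This holds for any natural transformation in place of $\sigma$, by functoriality of $\otimes$ and naturality of $\lambda$, so it is a tautology.

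The target you set, that every occurrence of $\sigma_A$ ends up postcomposed with $A\otimes u$, cannot be met. If it were, Step 1 would turn the expression into a composite of $u$, $m^{\pm 1}$, identities and coherence isomorphisms. No such composite can carry the $U$ on the left of $A$ over to its right. So one uncollapsed $U\otimes\sigma_A$ always survives. Neither $(A\otimes u)\circ\sigma_A = (A\otimes u)\circ\tau_A$ nor $\sigma_U=\tau_U$ says anything about it. Strictification or graphical calculus will not rescue this, since what is missing is a relation, not bookkeeping.

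The missing idea is to relate $\sigma$ and $\tau$ directly, via naturality of $\sigma$ along the morphism $\tau_A\colon U\otimes A\to A\otimes U$. The hexagon for $\sigma$ together with your $\sigma_U=\id_{U\otimes U}$ gives $\sigma_{U\otimes A} = \alpha^{-1}_{UAU}\circ(U\otimes\sigma_A)$ and $\sigma_{A\otimes U} = (\sigma_A\otimes U)\circ\alpha^{-1}_{UAU}$. Combining these with that naturality square yields the interchange law
\[
(\tau_A\otimes U)\circ\alpha^{-1}_{UAU}\circ(U\otimes\sigma_A) = (\sigma_A\otimes U)\circ\alpha^{-1}_{UAU}\circ(U\otimes\tau_A).
\]
Now use Step 1 to let the collapsed copy carry the \emph{other} half-braiding. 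Set $P = (\rho_A\otimes U)\circ((A\otimes u)\otimes U)$. For $\{\beta,\gamma\}=\{\sigma,\tau\}$, your own computation gives $P\circ(\gamma_A\otimes U)\circ\alpha^{-1}_{UAU} = K$ independently of $\gamma$, hence
\[
\beta_A = P\circ(\gamma_A\otimes U)\circ\alpha^{-1}_{UAU}\circ(U\otimes\beta_A)\circ J .
\]
The instances $\beta=\sigma$ and $\beta=\tau$ have equal middle factors by the interchange law, so $\sigma_A=\tau_A$. This is exactly the paper's argument.
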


\begin{proof}
  We adapt the proof of \cite[Lemma 2.6]{sheaf-rep}. Set $u' := (\lambda_U \circ (u \otimes U))^{-1}$ and $v' := (\lambda_U \circ (v \otimes U))^{-1}$, and let $A \in \ob(\C)$. Then for any distinct $\beta, \gamma \in \{\sigma, \tau\}$, the following diagram commutes; the respective identities used are the fact that $u$ respects the half-braidings $\gamma$ and $\rho^{-1} \circ \lambda$, the coherence theorem for monoidal categories, the naturality of $\lambda$, the naturality of $\alpha$, and the naturality of $\beta$:
  \begin{equation} \label{hbeq:1}
    \begin{tikzcd}
      {(A \otimes I) \otimes U} &&&& {A \otimes U} && {U \otimes A} \\
      \\
      && {(I \otimes A) \otimes U} && {I \otimes (A \otimes U)} && {I \otimes (U \otimes A)} \\
      \\
      {(A \otimes U) \otimes U} && {(U \otimes A) \otimes U} && {U \otimes (A \otimes U)} && {U \otimes (U \otimes A)}
      \arrow["{\rho_A \otimes U}", from=1-1, to=1-5]
      \arrow["{{\beta_A}}"', from=1-7, to=1-5]
      \arrow["{{\lambda_{U \otimes A}^{-1}}}", from=1-7, to=3-7]
      \arrow["{{\lambda_A \otimes U}}", from=3-3, to=1-5]
      \arrow["{{\alpha_{I A U}}}", from=3-3, to=3-5]
      \arrow["{{\lambda_{A \otimes U}}}"', from=3-5, to=1-5]
      \arrow["{{I \otimes \beta_A}}"', from=3-7, to=3-5]
      \arrow["{{u^{-1} \otimes (U \otimes A)}}", from=3-7, to=5-7]
      \arrow["{(A \otimes u) \otimes U}"', from=5-1, to=1-1]
      \arrow["{{(u \otimes A) \otimes U}}"', from=5-3, to=3-3]
      \arrow["{\gamma_A \otimes U}"', from=5-3, to=5-1]
      \arrow["{{u \otimes (A \otimes U)}}"', from=5-5, to=3-5]
      \arrow["{{\alpha_{U A U}^{-1}}}"', from=5-5, to=5-3]
      \arrow["{{U \otimes \beta_A}}"', from=5-7, to=5-5]
    \end{tikzcd}
  \end{equation}
  Next, observe that the following diagram commutes; the respective identities used are the fact that $\sigma_U = \id_{U \otimes U}$ since $U$ is a central idempotent \cite[Lemma 2.5]{sheaf-rep}, the half-braiding hexagon identity for $\sigma$, the naturality of $\tau$, the half-braiding hexagon identity for $\sigma$, and the fact that $\sigma_U = \id_{U \otimes U}$:
  \begin{equation} \label{hbeq:2}
    \begin{tikzcd}
      {U \otimes (U \otimes A)} &&&& {U \otimes (A \otimes U)} \\
      && {(U \otimes U) \otimes A} \\
      \\
      && {(U \otimes U) \otimes A} \\
      {U \otimes (U \otimes A)} &&&& {(U \otimes A) \otimes U} \\
      \\
      {U \otimes (A \otimes U)} &&&& {(A \otimes U) \otimes U} \\
      && {A \otimes (U \otimes U)} \\
      \\
      && {A \otimes (U \otimes U)} \\
      {(U \otimes A) \otimes U} &&&& {(A \otimes U) \otimes U}
      \arrow["{{U \otimes \sigma_A}}", from=1-1, to=1-5]
      \arrow[equals, from=1-1, to=5-1]
      \arrow["{{\alpha_{U A U}^{-1}}}", from=1-5, to=5-5]
      \arrow["{\alpha_{U U A}}", from=2-3, to=1-1]
      \arrow["{{\sigma_U \otimes A}}"', from=4-3, to=2-3]
      \arrow["{\alpha_{U U A}^{-1}}", from=5-1, to=4-3]
      \arrow["{{\sigma_{U \otimes A}}}"{description}, from=5-1, to=5-5]
      \arrow["{{U \otimes \tau_A}}", from=5-1, to=7-1]
      \arrow["{{\tau_A \otimes U}}", from=5-5, to=7-5]
      \arrow["{{\sigma_{A \otimes U}}}", from=7-1, to=7-5]
      \arrow["{{\alpha_{U A U}^{-1}}}", from=7-1, to=11-1]
      \arrow[equals, from=7-5, to=11-5]
      \arrow["{\alpha_{A U U}^{-1}}"', from=8-3, to=7-5]
      \arrow["{{A \otimes \sigma_U}}"', from=10-3, to=8-3]
      \arrow["{\sigma_A \otimes U}"{description}, from=11-1, to=11-5]
      \arrow["{\alpha_{A U U}}"', from=11-5, to=10-3]
    \end{tikzcd}
  \end{equation}
  The commutativity of diagram \ref{hbeq:1} means that the outermost pentagons of the following diagram commute, and the commutativity of diagram \ref{hbeq:2} means that the two morphisms in the center are equal:
  \[
    \begin{tikzcd}
      {U \otimes A} &&&&&& {A \otimes U} \\
      \\
      & {U \otimes (U \otimes A)} &&&& {(A \otimes U) \otimes U} \\
      \\
      {A \otimes U} &&&&&& {A \otimes U}
      \arrow["{(u^{-1} \otimes (U \otimes A)) \circ \lambda^{-1}_{U \otimes A}}"{pos=0.6}, from=1-1, to=3-2]
      \arrow["{\tau_A}"{description}, from=1-1, to=5-1]
      \arrow[equals, nfold, from=1-7, to=1-1]
      \arrow["{\sigma_A}", from=1-7, to=5-7]
      \arrow["{(\tau_A \otimes U) \circ \alpha_{U A U}^{-1} \circ (U \otimes \sigma_A)}", shift left=2, from=3-2, to=3-6]
      \arrow["{(\sigma_A \otimes U) \circ \alpha_{U A U}^{-1} \circ (U \otimes \tau_A)}"', shift right=2, from=3-2, to=3-6]
      \arrow["{(\rho_A \otimes U) \circ ((A \otimes u) \otimes U)}"'{pos=0.4}, from=3-6, to=5-7]
      \arrow[equals, nfold, from=5-1, to=5-7]
    \end{tikzcd}
  \]
  Thus, $\sigma = \tau$.
\end{proof}

The most important case of the above proposition is when $\C$ is a braided monoidal category. The braiding induces a half-braiding on each object, and these induced half-braidings are respected by any morphism. Thus, any central idempotent $u: (U, \sigma) \to I$ is also a central idempotent under the induced half-braiding, so $\sigma$ must coincide with this half-braiding.

Before saying anything further about the interaction between central idempotents and monoidal functors, we give a name to the following notion, introduced in the statement of \cite[Lemma 3.4]{sheaf-rep}, which gives a condition under which the image of a central idempotent under a lax monoidal functor is again a central idempotent in the target category.

\begin{definition}
  Let $\C$ and $\D$ be monoidal categories and $F: \C \to \D$ be a lax monoidal functor. We say $F$ \textbf{weakly preserves central idempotents} if $F$ preserves half-braidings of central idempotents, and if both $\eta^F$ and $\mu^F_{A U}$ are invertible for all $A \in \ob(\C)$ and $U \in \zi(\C)$.
\end{definition}

If $U$ is a central idempotent in $\C$ and $F$ weakly preserves central idempotents, the induced half-braiding on $F(U)$ is unique by Proposition \ref{prop:unique-halfbr}. Thus $\zi(-)$ can be viewed as a functor from the category of monoidal categories and lax monoidal functors which weakly preserve central idempotents into the category of sets. Next, we observe that this functor behaves well with respect to natural isomorphisms, so long as the target category is sufficiently well-behaved.

\begin{theorem}
  Let $\C$ and $\D$ be monoidal categories, let $F, G: \C \to \D$ be lax monoidal functors that weakly preserve central idempotents, and let $\varepsilon: F \to G$ be a natural isomorphism. We have $\zi(F) = \zi(G)$.
\end{theorem}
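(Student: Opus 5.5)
The plan is to reduce the statement to Proposition~\ref{prop:equivalent-central-idempotents} together with a comparison of two central idempotents that share the same source object. Throughout, I read $\zi(F)$ as the map sending a central idempotent $u\colon (U,\sigma)\to I$ of $\C$ to
\[
w := (\eta^F)^{-1}\circ F(u)\colon F(U)\to I ,
\]
where $F(U)$ carries the unique induced half-braiding (unique by Proposition~\ref{prop:unique-halfbr}). Likewise $\zi(G)(u) := (\eta^G)^{-1}\circ G(u)$. The goal is to show these two central idempotents of $\D$ are isomorphic in $\Z(\D)/I$. Monoidality of $\varepsilon$ is not assumed, so $\varepsilon_I$ need not intertwine $\eta^F$ and $\eta^G$; the plan is designed to avoid needing that.

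\textbf{Step 1 (move to a common source).} Since $\varepsilon_U$ is an isomorphism, Proposition~\ref{prop:equivalent-central-idempotents} shows that
\[
v := (\eta^G)^{-1}\circ G(u)\circ \varepsilon_U
\]
is a central idempotent on $F(U)$ equivalent to $\zi(G)(u)$. By naturality of $\varepsilon$ we have $G(u)\circ\varepsilon_U = \varepsilon_I\circ F(u)$. Hence $v = \phi\circ w$, where
\[
\phi := (\eta^G)^{-1}\varepsilon_I\,\eta^F
\]
is some automorphism of $I$. The half-braiding on $F(U)$ coming from Proposition~\ref{prop:equivalent-central-idempotents} is, by Proposition~\ref{prop:unique-halfbr}, the same one $w$ uses, since both $v$ and $w$ are central idempotents out of $F(U)$. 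So it remains to prove the following claim.

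\textbf{Claim.} Two central idempotents $v,w\colon (V,\tau)\to I$ on the same object $V$ are equivalent.

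\textbf{Step 2 (the comparison maps).} Let $\ell_w := \lambda_V\circ(w\otimes V)$ and $\ell_v := \lambda_V\circ(v\otimes V)$; both are invertible by definition. Define
\[
\theta := \ell_v\circ \ell_w^{-1}\colon V\to V .
\]
Then $\theta$ is an isomorphism, and it lies in $\Z(\D)$: $v$, $w$ and $\lambda$ respect half-braidings, and the half-braiding on $V\otimes V$ is the tensor half-braiding, so $\ell_v$ and $\ell_w$ are morphisms in the center. It remains to check $w\circ\theta = v$. Using the square in the definition of a central idempotent, $\ell_w = \rho_V\circ(V\otimes w)$, so we have two presentations of each $\ell$. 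By interchange and naturality of the unitors,
\[
w\circ\ell_v \;\simeq\; v\otimes w \qquad\text{and}\qquad v\circ\ell_w \;\simeq\; w\otimes v ,
\]
where $\simeq$ means equality up to the canonical isomorphism $I\otimes I\cong I$.

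\textbf{Step 3 (swap the two tensor factors; the main obstacle).} The task is to identify $v\otimes w$ with $w\otimes v$ as maps $V\otimes V\to I$ up to the unitors. I would first try to use $\tau_V=\id_{V\otimes V}$ (from \cite[Lemma 2.5]{sheaf-rep}, already cited in the proof of Proposition~\ref{prop:unique-halfbr}). Since $w$ respects the half-braidings $\tau$ and $\rho^{-1}\lambda$, we get
\[
(V\otimes w)\circ\tau_V = \rho^{-1}\lambda\circ(w\otimes V).
\]
With $\tau_V=\id$ this gives $\rho_V\circ(V\otimes w) = \lambda_V\circ(w\otimes V)$. Applying $v$ and using naturality of the unitors then turns $(v\otimes I)$ into $(I\otimes v)$ appropriately, yielding $w\circ\ell_v = v\circ\ell_w$. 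This in turn gives
\[
w\circ\theta = v\circ\ell_w\circ\ell_w^{-1} = v .
\]
I expect the careful bookkeeping of unitors and associators in this step to be the main difficulty. If it becomes messy, a fallback is to replace $\theta$ by a diagram chase mirroring diagram~\eqref{hbeq:1} in the proof of Proposition~\ref{prop:unique-halfbr}.

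\textbf{Conclusion.} Step 3 exhibits an isomorphism $w\cong v$ in $\Z(\D)/I$. Combined with Step 1 this gives $\zi(F)(u)=\zi(G)(u)$ for every $u$, hence $\zi(F)=\zi(G)$.
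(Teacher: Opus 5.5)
The gap is in Step~1, where you assert that the half-braiding $\tau_v$ that $v$ inherits via Proposition~\ref{prop:equivalent-central-idempotents} equals the half-braiding $\tau_w$ of $w$ ``by Proposition~\ref{prop:unique-halfbr}, since both $v$ and $w$ are central idempotents out of $F(U)$.'' That proposition is about \emph{one} morphism being a central idempotent for two half-braidings. It says nothing about two different morphisms with the same source, and for general monoidal $\D$ that stronger statement is false. For $\phi\in\operatorname{Aut}(I)$, Proposition~\ref{prop:equivalent-central-idempotents} makes $\phi$ a central idempotent with half-braiding $\sigma^\phi_A=(A\otimes\phi^{-1})\circ\rho_A^{-1}\circ\lambda_A\circ(\phi\otimes A)$, while $\id_I$ is one with $\rho^{-1}\circ\lambda$. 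These agree only if $\lambda_A\circ(\phi\otimes A)\circ\lambda_A^{-1}=\rho_A\circ(A\otimes\phi)\circ\rho_A^{-1}$ for all $A$. That fails, e.g., for endofunctors of $\mathrm{Vect}\times\mathrm{Vect}$ (over $\mathbb{Q}$) under composition, with $\phi$ acting by $1$ and $2$ on the two factors and $A$ the swap functor. This situation arises inside the theorem: take $F=G$ the identity functor, $\varepsilon_A=\lambda_A\circ(\phi\otimes A)\circ\lambda_A^{-1}$ (natural in $A$), and $u=\id_I$; then $w=\id_I$ and $v=\varepsilon_I=\phi$. So your Claim, stated for a common $\tau$, does not cover the pair you apply it to. Likewise, your check that $\theta$ lies in $\Z(\D)$ (both $\ell$'s in $\lambda$-form over a single tensor half-braiding) is not justified.

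The repair is easy, because the identity $w\circ\ell_v=v\circ\ell_w$, hence $w\circ\theta=v$, uses only the defining square of $w$ and $\lambda_I=\rho_I$, never a half-braiding. (So the detour through $\tau_V=\id$ is unnecessary as well.) Apply Proposition~\ref{prop:equivalent-central-idempotents} to $w$ and the isomorphism $\theta$: the morphism $w\circ\theta=v$ is a central idempotent equivalent to $w$ under the transported half-braiding. Proposition~\ref{prop:unique-halfbr}, now legitimately applied to the single morphism $v$, identifies that transported half-braiding with $\tau_v$. Alternatively, regard $\ell_w=\lambda_V\circ(w\otimes V)$ as a central map $(V,\tau_w)\otimes(V,\tau_v)\to(V,\tau_v)$ and $\ell_v=\rho_V\circ(V\otimes v)$ as a central map $(V,\tau_w)\otimes(V,\tau_v)\to(V,\tau_w)$; then $\theta\colon(V,\tau_v)\to(V,\tau_w)$ is central directly.

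This transport-then-uniqueness step is exactly how the paper concludes; the difference lies in the isomorphism used. The paper takes $f_a=\rho_W\circ(W\otimes a)\circ\rho_W^{-1}$ with $a=\phi$, which satisfies $w\circ f_a=a\circ w$ by a short unitor computation. Your $\theta=\ell_v\circ\ell_w^{-1}$ reduces to $\lambda_V\circ(\phi\otimes V)\circ\lambda_V^{-1}$ when $v=\phi\circ w$, but in general it proves the stronger fact that any two central idempotents with the same source object are equivalent.
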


\begin{proof}
  Let $u: U \to I_\C$ be a central idempotent. Then $\zi(F)(u)$ and $\zi(G)(u)$ are central idempotents, and moreover, $\zi(G)(u) \circ \varepsilon_U: F(U) \to I$ is the same as the central idempotent $\zi(G)(u)$ up to an automorphism of $I$. Indeed, for any central idempotent $w \colon W \to I$, define $f_A = \rho_W \circ (W \otimes a) \circ \rho_W^{-1}$. Then
  \begin{align*}
    w \circ \rho_W \circ (W \otimes a) 
    & = \rho_I \circ (w \otimes a) \\
    & = \rho_I \circ (I \otimes a) \circ (w \otimes I) \\
    & = a \circ \rho_I \circ (w \otimes I) \\
    & = a \circ w \circ \rho_W,   
  \end{align*}
  so $w \circ f_a = a \circ w$, and $f_a$ is invertible. Applying Propositions~\ref{prop:equivalent-central-idempotents} and~\ref{prop:unique-halfbr}, we see that $\Z(F)(u) = \Z(G)(u)$ in $\Z(\D)$.
\end{proof}

This allows us to conclude that sufficiently well-behaved monoidal equivalences yield bijections of the central idempotent semilattices. In particular, we have:

\begin{corollary}
  Let $\C$ and $\D$ be monoidal categories, and let $F: \C \to \D$ be an equivalence of categories such that $F$ and $F^{-1}$ are lax monoidal functors that weakly preserve central idempotents. Then $\zi(F^{-1}) = \zi(F)^{-1}$.
\end{corollary}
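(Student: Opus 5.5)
The plan is to apply the preceding theorem to the two composites $F^{-1}\circ F \colon \C \to \C$ and $F \circ F^{-1} \colon \D \to \D$. We then use functoriality of $\zi(-)$ on the category of monoidal categories and lax monoidal functors that weakly preserve central idempotents, as noted after the definition of weak preservation.

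The first step is to check that the composites lie in this category. A composite of lax monoidal functors is lax monoidal, with structure maps $\eta^{GF} = G(\eta^F)\circ \eta^G$ and $\mu^{GF}_{AB} = G(\mu^F_{AB}) \circ \mu^G_{FA,FB}$.
\begin{itemize}
\item If $F$ and $G$ weakly preserve central idempotents, then $\eta^{GF}$ is invertible.
\item For $U$ a central idempotent, $F(U)$ is a central idempotent by the cited \cite[Lemma 3.4]{sheaf-rep}. So $\mu^G_{FA,FU}$ and $\mu^F_{AU}$ are invertible, hence $\mu^{GF}_{AU}$ is invertible.
\item Half-braidings compose. If $\tau$ on $F(U)$ is preserved by $G$ via $\tau'$ on $GF(U)$, then stacking the two defining squares shows $GF$ preserves $\sigma$ via $\tau'$.
\end{itemize}
The same diagram chase gives functoriality, $\zi(G\circ F) = \zi(G)\circ\zi(F)$: the image of $u$ under $GF$ is $G(F(u))$ composed with the unit comparisons, and its half-braiding is forced by Proposition \ref{prop:unique-halfbr}. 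The identity functors trivially weakly preserve central idempotents, and $\zi(\id) = \id$.

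Next we apply the theorem. Since $F$ is an equivalence with inverse $F^{-1}$, there are natural isomorphisms $F^{-1}F \cong \id_\C$ and $FF^{-1}\cong \id_\D$. Both sides of each isomorphism are lax monoidal and weakly preserve central idempotents, so the theorem gives
\[
\zi(F^{-1})\circ\zi(F) = \zi(F^{-1}F) = \zi(\id_\C) = \id, \qquad \zi(F)\circ\zi(F^{-1}) = \id.
\]
Hence $\zi(F^{-1}) = \zi(F)^{-1}$.

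The main obstacle is a subtle point. The theorem as stated only requires $\varepsilon$ to be a natural isomorphism of underlying functors, and the unit and counit of an arbitrary equivalence need not be monoidal. The proof of the theorem never uses monoidality of $\varepsilon$: it only uses the isomorphism $\varepsilon_U$ together with Propositions \ref{prop:equivalent-central-idempotents} and \ref{prop:unique-halfbr}, and the identification up to an automorphism of the unit. So I would invoke it as stated. The other point needing care is that elements of $\zi$ are equivalence classes in $\Z(\C)/I$, so functoriality must be checked on these classes. There I would again use Proposition \ref{prop:equivalent-central-idempotents} to absorb the unit comparison isomorphisms $\eta$ that appear in $\zi(G\circ F)$.
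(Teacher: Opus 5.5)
Your proposal is correct and takes the route the paper intends. The paper gives no separate proof: it derives the corollary directly from the preceding theorem, applied to $F^{-1}F \cong \id_\C$ and $FF^{-1} \cong \id_\D$, together with the functoriality of $\zi$ asserted after the definition of weak preservation, and your extra checks supply details it leaves implicit. These are that composites of such functors again weakly preserve central idempotents, and that the theorem's proof never uses monoidality of $\varepsilon$, since it absorbs the automorphism $(\eta^G)^{-1}\circ\varepsilon_I\circ\eta^F$ of the unit.
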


It is especially useful to consider functors which not only preserve the central idempotents themselves, but also their finite meets.

\begin{definition} \label{def:stronglypreserves}
  (Adapted from \cite{sheaf-rep}). Let $\C$ and $\D$ be monoidal categories and $F: \C \to \D$ be a lax monoidal functor. We say $F$ \textbf{strongly preserves central idempotents} if $F$ weakly preserves central idempotents and the following diagrams commute for any $A \in \ob(\D)$ and $U, V \in \zi(\C)$:
  \[
    \begin{tikzcd}
      {F(I) \otimes A} && {I \otimes A} && \\
      &&&& A \\
      {A \otimes F(I)} && {A \otimes I}
      \arrow["{{(\eta^F)^{-1} \otimes A}}", from=1-1, to=1-3]
      \arrow["{{F(\sigma_I)_A}}"', from=1-1, to=3-1]
      \arrow["{{\lambda_A}}", from=1-3, to=2-5]
      \arrow["{{\rho^{-1}_A}}", from=2-5, to=3-3]
      \arrow["{{A \otimes \eta^F}}"', from=3-3, to=3-1]
    \end{tikzcd}
  \]
  \[
    \begin{tikzcd}
      {F(U \otimes V) \otimes A} && {A \otimes F(U \otimes V)} \\
      \\
      {(F(U) \otimes F(V)) \otimes A} && { A \otimes (F(U) \otimes F(V))} \\
      \\
      {F(U) \otimes (F(V) \otimes A)} && {(A \otimes F(U)) \otimes F(V)} \\
      \\
      {F(U) \otimes (A \otimes F(V))} && {(F(U) \otimes A) \otimes F(V)}
      \arrow["{{F(\sigma_{U \otimes V})_A}}", from=1-1, to=1-3]
      \arrow["{{\mu_{U V}^{-1} \otimes A}}", from=1-1, to=3-1]
      \arrow["{{\alpha_{F(U) F(V) A}}}", from=3-1, to=5-1]
      \arrow["{{A \otimes \mu_{U V}}}"', from=3-3, to=1-3]
      \arrow["{{F(U) \otimes F(\sigma_V)_A}}", from=5-1, to=7-1]
      \arrow["{{\alpha_{A F(U) F(V)}}}"', from=5-3, to=3-3]
      \arrow["{{\alpha^{-1}_{F(U) A F(V)}}}", from=7-1, to=7-3]
      \arrow["{{F(\sigma_U)_A \otimes F(V)}}"', from=7-3, to=5-3]
    \end{tikzcd}
  \]
\end{definition}

Notice that these conditions only regard the choice of induced half-braiding in the functor's target category, so functors into categories with suitably well-behaved half-braidings will satisfy them automatically. In particular, this is true of braided monoidal categories.

\begin{theorem} \label{thm:weakly-preserves-zi-implies-strongly-preserves}
  Let $\C$ and $\D$ be braided monoidal categories and $F: \C \to \D$ be a lax braided monoidal functor that weakly preserves central idempotents. Then $F$ strongly preserves central idempotents.
\end{theorem}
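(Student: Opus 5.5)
The plan is to use Proposition~\ref{prop:unique-halfbr} to identify every induced half-braiding with the one coming from the braiding of $\D$, after which both diagrams of Definition~\ref{def:stronglypreserves} reduce to braiding coherence in $\D$.

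First, fix a central idempotent $u\colon (U,\sigma)\to I$ in $\C$. By the remark following Proposition~\ref{prop:unique-halfbr}, $\sigma$ is the half-braiding induced by the braiding $c^\C$ of $\C$, so $\sigma_A = c^\C_{U,A}$. Since $F$ weakly preserves central idempotents, $F(u)$ (composed with $(\eta^F)^{-1}$) is a central idempotent in $\D$ with some induced half-braiding $F(\sigma)$ on $F(U)$. Applying the same remark in $\D$, we get $F(\sigma)_A = c^\D_{F(U),A}$ for every $A\in\ob(\D)$.

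It is worth checking that this is consistent with the defining square for preserving $\sigma$. That square reads $F(c^\C_{U,A})\circ\mu_{UA} = \mu_{AU}\circ c^\D_{F(U),F(A)}$, which is exactly the statement that $F$ is lax braided. So no choice is involved. The units are handled the same way: $\sigma_I$ on $I_\C$ is the braiding $c^\C_{I,-}$, and $\id_I$ is a central idempotent. Hence $F(\sigma_I)_A = c^\D_{F(I),A}$.

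With these identifications, the first diagram asks that $c^\D_{F(I),A}$ equal $(A\otimes\eta^F)\circ\rho_A^{-1}\circ\lambda_A\circ((\eta^F)^{-1}\otimes A)$. To see this, use naturality of $c^\D$ in its first variable along the isomorphism $\eta^F\colon I\to F(I)$. This moves $c^\D_{F(I),A}$ to $c^\D_{I,A}$, and $c^\D_{I,A} = \rho_A^{-1}\circ\lambda_A$ is a standard consequence of the hexagon axioms in a braided category.

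For the second diagram, replace each half-braiding by the braiding. We need $c^\D_{F(U\otimes V),A}$ to equal $\mu_{UV}$ conjugating the composite of $\alpha$, $F(U)\otimes c^\D_{F(V),A}$, $\alpha^{-1}$, $c^\D_{F(U),A}\otimes F(V)$, and $\alpha$. Naturality of $c^\D$ along the isomorphism $\mu_{UV}$ reduces this to $c^\D_{F(U)\otimes F(V),A}$. That equals the displayed composite by the hexagon axiom for $c^\D$, in its form expanding $c_{X\otimes Y,A}$. One point needs care here. The pairing of $U$ and $V$ is a central idempotent: the meet $U\otimes V$ lies in $\zi(\C)$, and $\mu_{UV}$ is invertible by the weak preservation hypothesis (with $A=U$). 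We also need that the half-braiding on $F(U\otimes V)$ is indeed $c^\D_{F(U\otimes V),-}$, which again follows from Proposition~\ref{prop:unique-halfbr} applied to $F(U\otimes V)$.

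The main obstacle is the first step: making rigorous that the half-braiding $F$ induces on $F(U)$ coincides with the braiding of $\D$. The danger is that the central idempotent structure on $F(U)$ involves $F(u)$ composed with $(\eta^F)^{-1}$. I would first verify that this composite respects $c^\D$ via the naturality of $c^\D$. Then I would invoke Proposition~\ref{prop:unique-halfbr} together with the remark after it that in a braided category every morphism respects the induced half-braidings. Everything after that step is routine coherence.
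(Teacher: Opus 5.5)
Your proposal is correct and follows essentially the same route as the paper. You replace each induced half-braiding by the braiding of $\D$, then check the unit diagram by naturality of the braiding along $\eta^F$ together with $\sigma_{I A} = \rho_A^{-1}\circ\lambda_A$, and the second diagram by naturality along $\mu_{UV}$ together with the hexagon identity. The one difference is that the paper substitutes the braiding for $F(\sigma_U)_A$ without comment, whereas you justify that substitution explicitly via Proposition~\ref{prop:unique-halfbr} and the fact that every morphism into $I$ respects the braiding-induced half-braidings.
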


\begin{proof}
  For any $A \in \ob(\D)$, the square in the following diagram commutes by the naturality of $\sigma$, and the triangle commutes because $\sigma$ is a braiding, so the diagram as a whole commutes.
  \[
    \begin{tikzcd}
      {F(I) \otimes A} && {I \otimes A} && \\
      &&&& A \\
      {A \otimes F(I)} && {A \otimes I}
      \arrow["{{(\eta^F)^{-1} \otimes A}}", from=1-1, to=1-3]
      \arrow["{{\sigma_{F(I) A}}}"', from=1-1, to=3-1]
      \arrow["{{\lambda_A}}", from=1-3, to=2-5]
      \arrow["{{\sigma_{I A}}}", from=1-3, to=3-3]
      \arrow["{{\rho^{-1}_A}}", from=2-5, to=3-3]
      \arrow["{{A \otimes \eta^F}}"', from=3-3, to=3-1]
    \end{tikzcd}
  \]
  Additionally, for any $U, V \in \zi(\C)$, the square in the following diagram commutes by the naturality of $\sigma$, and the triangle commutes by the hexagon identities, so the diagram as a whole commutes.
  \[
    \begin{tikzcd}
      {F(U \otimes V) \otimes A} && {A \otimes F(U \otimes V)} \\
      \\
      {(F(U) \otimes F(V)) \otimes A} && { A \otimes (F(U) \otimes F(V))} \\
      \\
      {F(U) \otimes (F(V) \otimes A)} && {(A \otimes F(U)) \otimes F(V)} \\
      \\
      {F(U) \otimes (A \otimes F(V))} && {(F(U) \otimes A) \otimes F(V)}
      \arrow["{{\sigma_{F(U \otimes V) A}}}", from=1-1, to=1-3]
      \arrow["{{\mu_{U V}^{-1} \otimes A}}", from=1-1, to=3-1]
      \arrow["{{\sigma_{(F(U) \otimes F(V)) A}}}", from=3-1, to=3-3]
      \arrow["{{\alpha_{F(U) F(V) A}}}", from=3-1, to=5-1]
      \arrow["{{A \otimes \mu_{U V}}}"', from=3-3, to=1-3]
      \arrow["{{F(U) \otimes \sigma_{F(V) A}}}", from=5-1, to=7-1]
      \arrow["{{\alpha_{A F(U) F(V)}}}"', from=5-3, to=3-3]
      \arrow["{{\alpha^{-1}_{F(U) A F(V)}}}"', from=7-1, to=7-3]
      \arrow["{{\sigma_{F(U) A} \otimes F(V)}}"', from=7-3, to=5-3]
    \end{tikzcd}
  \]
  Thus, $F$ strongly preserves central idempotents.
\end{proof}

Observe that any strong monoidal functor automatically satisfies the invertibility conditions required to weakly preserve central idempotents, so for a given strong monoidal functor, we need only require that it preserves half-braidings in order to ensure that it weakly preserves central idempotents. Combining this observation with the well-behavedness of braided monoidal categories yields the following corollary.

\begin{corollary}
  Let $\C$ and $\D$ be braided monoidal categories and $F: \C \to \D$ be a strong braided monoidal functor. Then $F$ strongly preserves central idempotents.
\end{corollary}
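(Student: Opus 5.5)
The corollary should follow from Theorem \ref{thm:weakly-preserves-zi-implies-strongly-preserves} once we check that $F$ weakly preserves central idempotents. So the plan is to verify the three conditions in that definition, using only that $F$ is strong and braided, and then invoke the theorem (a strong braided functor is in particular lax braided).

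First, the invertibility conditions. Since $F$ is strong monoidal, $\eta^F$ is invertible, and $\mu^F_{AB}$ is invertible for all objects $A,B$. In particular $\mu^F_{AU}$ is invertible for every $A\in\ob(\C)$ and every $U$ underlying a central idempotent.

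Second, the preservation of half-braidings of central idempotents. Let $u\colon (U,\sigma)\to I$ be a central idempotent in $\C$. By Proposition \ref{prop:unique-halfbr} and the remark following it, $\sigma$ must equal the half-braiding induced by the braiding of $\C$, namely $\sigma_A=\sigma^{\C}_{UA}$. On $F(U)$ we take the half-braiding $\tau$ induced by the braiding of $\D$, $\tau_{B}=\sigma^{\D}_{F(U)B}$. This is a half-braiding because it is natural in $B$ and satisfies the hexagon axiom of the braiding, which is exactly the half-braiding axiom. The square required in the definition of preserving $\sigma$ then reads
\[F(\sigma^{\C}_{UA})\circ\mu^F_{UA}=\mu^F_{AU}\circ\sigma^{\D}_{F(U)F(A)},\]
and this is precisely the defining axiom of a braided monoidal functor.

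The only subtle step is identifying $\sigma$ with the braiding-induced half-braiding, and that is already supplied by Proposition \ref{prop:unique-halfbr}. Hence $F$ weakly preserves central idempotents, and Theorem \ref{thm:weakly-preserves-zi-implies-strongly-preserves} gives that $F$ strongly preserves them.
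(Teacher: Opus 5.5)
Your proposal is correct and follows the paper's own route: strongness makes $\eta^F$ and $\mu^F$ invertible, braidedness of $F$ gives preservation of the half-braidings, and Theorem \ref{thm:weakly-preserves-zi-implies-strongly-preserves} then finishes the argument. You are more explicit than the paper, which leaves implicit the step where you use Proposition \ref{prop:unique-halfbr} to identify the half-braiding of a central idempotent with the one induced by the braiding.
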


We state as a separate corollary the special case where $F$ is the canonical functor from $\C$ to its strictification, as described, for example, in \cite{quasi-hopf}.

\begin{corollary} \label{cor:zi-strictification}
  Let $\C$ be a braided monoidal category and $\overline\C$ its strictification. Then $\zi(\C) \cong \zi(\overline\C)$ as meet-semilattices.
\end{corollary}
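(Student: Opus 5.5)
The plan is to apply the preceding corollary in both directions, using the strictification equivalence. Let $S:\C\to\overline\C$ be the canonical strong braided monoidal functor and $T:\overline\C\to\C$ its inverse equivalence. Following the construction in \cite{quasi-hopf}, $T$ is again strong braided monoidal, and there are monoidal natural isomorphisms $T S\cong \id_\C$ and $S T\cong \id_{\overline\C}$. Because both $S$ and $T$ are strong braided monoidal functors between braided monoidal categories, the previous corollary tells us that each of them strongly preserves central idempotents.

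In particular, each weakly preserves central idempotents, so we obtain induced maps $\zi(S):\zi(\C)\to\zi(\overline\C)$ and $\zi(T):\zi(\overline\C)\to\zi(\C)$. I expect functoriality of $\zi(-)$ to be straightforward. For $\zi(T S)=\zi(T)\circ\zi(S)$, the half-braiding induced on $TS(U)$ by the composite is unique by Proposition~\ref{prop:unique-halfbr}, so it must agree with the half-braiding obtained in two stages.

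Next I apply the theorem on natural isomorphisms to $TS\cong\id_\C$ and $ST\cong\id_{\overline\C}$. This gives $\zi(T)\circ\zi(S)=\zi(TS)=\zi(\id_\C)=\id$, and symmetrically for $ST$. Equivalently, the preceding corollary yields $\zi(S^{-1})=\zi(S)^{-1}$, so $\zi(S)$ is a bijection.

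It remains to check that $\zi(S)$ is monotone and preserves meets. The meet of $u:U\to I$ and $v:V\to I$ is represented by $U\otimes V\to I\otimes I\cong I$, carrying the half-braiding built from those of $U$ and $V$. The diagrams in Definition~\ref{def:stronglypreserves} express exactly that $S(U\otimes V)\cong S(U)\otimes S(V)$ via $\mu$, and that this isomorphism carries the induced half-braiding to the tensor product half-braiding. The unit diagram does the same for the top element $\id_I$ via $\eta$. Combined with Proposition~\ref{prop:equivalent-central-idempotents}, precomposing with the iso $\mu_{UV}$ lands in the same class, so $\zi(S)(u\wedge v)=\zi(S)(u)\wedge\zi(S)(v)$ and $\zi(S)(\id_I)=\id_I$. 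Monotonicity follows, since $u\le v$ iff $u\wedge v=u$, or directly because $S$ maps morphisms in $\Z(\C)/I$ to morphisms in $\Z(\overline\C)/I$ after correcting by $\eta$. A meet-preserving bijection whose inverse also preserves meets (by the same argument for $T$) is an isomorphism of meet-semilattices.

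The main obstacle I expect is the bookkeeping in this last step. The image $S(u)$ has codomain $S(I)$, not $I$, so one must compose with $(\eta^S)^{-1}$ and verify that the identifications in $\Z(\overline\C)/I$ are respected. The first diagram of Definition~\ref{def:stronglypreserves} is what makes the half-braiding on $S(I)$ compatible with $\rho^{-1}\circ\lambda$ on $I$. A secondary point is confirming that $T$ is indeed strong braided; if needed, I would instead derive this from the standard fact that an inverse of a strong monoidal equivalence is canonically strong monoidal and inherits braidedness.
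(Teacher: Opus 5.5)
Your proposal is correct and follows the route the paper intends: the paper obtains this corollary as the special case of the preceding corollary for the canonical strong braided monoidal functor $\C\to\overline\C$, with bijectivity coming from the equivalence corollary and the theorem on natural isomorphisms, exactly as you lay out. Your treatment of meets via $\mu_{UV}$, the unit diagram, and Proposition~\ref{prop:equivalent-central-idempotents} makes explicit the bookkeeping the paper leaves implicit.
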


\bibliographystyle{amsplain}
\bibliography{refs}

\end{spacing}

\end{document}